\renewcommand{\phi}{\varphi}
\renewcommand{\projlim}{\varprojlim}
\renewcommand{\geq}{\geqslant}
\renewcommand{\leq}{\leqslant} 
\renewcommand{\hat}{\widehat}
\newcommand{\Qp}{\mathbf{Q}_p}
\newcommand{\Fp}{\mathbf{F}_p}
\newcommand{\Fpn}{\mathbf{F}_{p^n}}
\newcommand{\Zp}{\mathbf{Z}_p}
\newcommand{\ZZ}{\mathbf{Z}}
\newcommand{\QQ}{\mathbf{Q}}
\newcommand{\RR}{\mathbf{R}}
\newcommand{\Qpbar}{\overline{\mathbf{Q}}_p}
\newcommand{\Fpbar}{\overline{\mathbf{F}}_p}
\newcommand{\B}{\mathrm{B}}
\newcommand{\K}{\mathrm{K}}
\newcommand{\I}{\mathrm{I}_1}
\newcommand{\Z}{\mathrm{Z}}
\newcommand{\gal}{\mathcal{G}_{\Qp}}
\newcommand{\hal}{\mathcal{H}_{\Qp}}
\newcommand{\weil}{\mathcal{W}_{\Qp}}
\newcommand{\inert}{\mathcal{I}_{\Qp}}
\newcommand{\Gal}{\mathrm{Gal}}
\newcommand{\ind}{\mathrm{ind}}
\newcommand{\pr}{\mathrm{pr}}
\newcommand{\Mat}{\mathrm{Mat}}
\newcommand{\NP}{\mathrm{NP}}
\newcommand{\Frob}{\mathrm{Frob}}
\newcommand{\M}{\mathrm{M}}
\newcommand{\GL}{\mathrm{GL}}
\newcommand{\Id}{\mathrm{Id}}
\newcommand{\val}{\mathrm{val}}
\newcommand{\vr}{\mathrm{val}_r}
\newcommand{\vx}{\mathrm{val}_X}
\newcommand{\efont}{\mathbf{E}}
\newcommand{\dtilde}{\widetilde{\mathrm{D}}}
\newcommand{\dfont}{\mathrm{D}}
\newcommand{\mfont}{\mathrm{M}}
\newcommand{\mtor}{\mathrm{M}_{\mathrm{tor}}}
\newcommand{\nfont}{\mathrm{N}}
\newcommand{\dpar}[1]{(\!( #1 )\!)}
\newcommand{\dcroc}[1]{[\![ #1 ]\!]}
\newcommand{\smat}[1]{\left( \begin{smallmatrix} #1 \end{smallmatrix} \right)}
\newcommand{\pmat}[1]{\begin{pmatrix} #1 \end{pmatrix}}
\newcommand{\disc}{\mathrm{Vec}_{\mathrm{disc}}(E)}
\newcommand{\comp}{\mathrm{Vec}_{\mathrm{comp}}(E)}
\newcommand{\discg}{\mathrm{Vec}^G_{\mathrm{disc}}(E)}
\newcommand{\compg}{\mathrm{Vec}^G_{\mathrm{comp}}(E)}
\author{Laurent Berger}
\address{UMPA, ENS de Lyon \\
UMR 5669 du CNRS \\
Universit\'e de Lyon}
\email{laurent.berger@ens-lyon.fr}
\urladdr{perso.ens-lyon.fr/laurent.berger/}
\author{Mathieu Vienney}
\address{UMPA, ENS de Lyon \\
UMR 5669 du CNRS \\
Universit\'e de Lyon}
\email{mathieu.vienney@ens-lyon.fr}
\urladdr{http://www.umpa.ens-lyon.fr/\~{}mvienney/}
\date{March 2012}
\title{Irreducible modular representations of the Borel subgroup of $\mathrm{GL}_2(\mathbf{Q}_p)$}
\subjclass{11F; 11S; 20C; 20G; 22E; 46A}
\keywords{Modular representation; Borel subgroup; Galois representation; $(\phi,\Gamma)$-module; $(\psi,\Gamma)$-module; $p$-adic Langlands correspondence; linear topology; non-commutative polynomial}
\thanks{This research is partially supported by the ANR grant Th\'eHopaD (Th\'eorie de Hodge $p$-adique et D\'eveloppements) ANR-11-BS01-005}
\begin{document}

\begin{abstract}
Let $E$ be a finite extension of $\Fp$. Using Fontaine's theory of $(\phi,\Gamma)$-modules, Colmez has shown how to attach to any irreducible $E$-linear representation of $\Gal(\Qpbar/\Qp)$ an infinite dimensional smooth irreducible $E$-linear representation of $\B_2(\Qp)$ that has a central character. We prove that every such representation of $\B_2(\Qp)$ arises in this way. 

Our proof extends to algebraically closed fields $E$ of characteristic $p$. In this case, infinite dimensional smooth irreducible $E$-linear representations of $\B_2(\Qp)$ having a central character arise in a similar way from irreducible $E$-linear representations of the Weil group of $\Qp$.
\end{abstract}

\maketitle

\tableofcontents

\setlength{\baselineskip}{18pt}

\section*{Introduction}\label{intro}


This article is inspired by the $p$-adic local Langlands correpondence for $\GL_2(\Qp)$, which is a bijection between some $2$-dimensional representations of $\Gal(\Qpbar/\Qp)$ and some representations of $\GL_2(\Qp)$. Colmez observed that this bijection, whose existence had been conjectured by Breuil, can be constructed using Fontaine's theory of $(\phi,\Gamma)$-modules, in order to obtain representations of $\B_2(\Qp)$, the upper triangular Borel subgroup of $\GL_2(\Qp)$, from $2$-dimensional representations of $\Gal(\Qpbar/\Qp)$. In this article, we determine completely which class of representations of $\B_2(\Qp)$ can be constructed by applying Colmez's method to irreducible mod $p$ representations of $\Gal(\Qpbar/\Qp)$ of any dimension.

Let $E$ be a finite extension of $\Fp$. If $V$ is a finite dimensional $E$-linear representation of $\Gal(\Qpbar/\Qp)$ and if $\chi$ is a smooth character of $\Qp^\times$, then Colmez's functor ``$\projlim_\psi D^\natural(\cdot)$'' allows us to construct a smooth representation $\Omega_\chi(V) = (\projlim_\psi D^\natural(V))^*$ of the group $\B_2(\Qp)$, having $\chi$ as central character. Our first result is the following (see also \cite{MVart}).

\begin{enonce*}{Theorem A}
If $E$ is a finite field, and if $\Pi$ is an infinite dimensional smooth irreducible $E$-linear representation of $\B_2(\Qp)$ having a central character $\chi$, then there exists an irreducible $E$-linear representation $V$ of $\Gal(\Qpbar/\Qp)$ such that $\Pi=\Omega_\chi(V)$. 
\end{enonce*}

Our proof extends to representations with coefficients in an algebraically closed field $E$ of characteristic $p$. The theory of $(\phi,\Gamma)$-modules is then less satisfactory, but one can still carry out Colmez's construction and prove an analogue of theorem A.

\begin{enonce*}{Theorem A'}
If $E$ is an algebraically closed field of characteristic $p$, and if $\Pi$ is an infinite dimensional smooth irreducible $E$-linear representation of $\B_2(\Qp)$ having a central character $\chi$, then there exists an irreducible $E$-linear representation $V$ of the Weil group of $\Qp$ such that $\Pi=\Omega_\chi(V)$. 
\end{enonce*}

This extension of theorem A depends on the following result, which (following a suggestion of Colmez) extends Fontaine's theory of $(\phi,\Gamma)$-modules to algebraically closed coefficient fields of characteristic $p$.

\begin{enonce*}{Theorem B}
If $E$ is an algebraically closed field of characteristic $p$, then there is a natural bijection between the set of irreducible $E$-linear representations of the Weil group of $\Qp$ and the set of irreducible $(\phi,\Gamma)$-modules over $E\dpar{X}$.
\end{enonce*}

This bijection, which is compatible with the usual theory of $(\phi,\Gamma)$-modules, does not seem to extend to reducible objects if $E$ is not an algebraic extension of $\Fp$.

In order to prove theorems A and A', we need to ``invert'' Colmez's construction $V \mapsto (\projlim_\psi D^\natural(V))^*$. This was done in some cases by Colmez (see \S IV of \cite{PCpgm} as well as \S 4 of Emerton's \cite{MEcoh}) and in much greater generality by Schneider and Vign\'eras (see \cite{SVfun}). Our method is similar. The finiteness result that we need in order to conclude is provided by Emerton (see \cite{MEcoh}).

Note that if $E$ is a field of characteristic different from $p$, then determining the smooth irreducible representations of $\B_2(\Qp)$ is a much easier problem (see for instance \S 8 of \cite{BHgl2} for the case $E=\mathbf{C}$). Likewise, it is a simple exercise to determine the finite dimensional smooth irreducible $E$-linear representations of $\B_2(\Qp)$.

\subsection*{Notation}
The letter $E$ stands for a field of characteristic $p$. Throughout this article, $E$ is given the discrete topology. We let $\B=\B_2(\Qp)$ and write $\gal$ for $\Gal(\Qpbar/\Qp)$. We define a map $n : \gal \to \hat{\ZZ}$ as follows: if $g \in \gal$, then the image of $g$ in $\Gal(\Fpbar/\Fp)$ is $\Frob_p^{n(g)}$ where $\Frob_p = [x \mapsto x^p]$.  The Weil group of $\Qp$ is $\weil = \{ g \in \gal$ such that $n(g) \in \ZZ\}$ and $\inert$ denotes the inertia subgroup of $\gal$.

In order to retain the spirit of the lectures given at the LMS Durham Symposium, we explain the idea of the proofs of some of the technical results that are taken from other papers, in order for this article to be more easily readable by newcomers to the subject. 

\section{$(\phi,\Gamma)$-modules and $(\psi,\Gamma)$-modules}\label{phigam}

In this section, we recall the definition of $(\phi,\Gamma)$-modules and $(\psi,\Gamma)$-modules and we explain how these objects are related to each other. 

The ring $E\dcroc{X}$ is given the $X$-adic topology, for which it is complete, and the field $E\dpar{X} = \cup_{n \geq 0} X^{-n} E\dcroc{X}$ is given the inductive limit topology when necessary.

The rings $E\dcroc{X}$ and $E\dpar{X}$ are equipped with a continuous Frobenius map $\phi$ given by $(\phi f)(X)=f(X^p)$. Let $\Gamma$ stand for the group $\Zp^\times$, the element of $\Gamma$ corresponding to $a \in \Zp^\times$ being denoted by $[a]$. The rings $E\dcroc{X}$ and $E\dpar{X}$ are also equipped with an action of $\Gamma$, given by $([a]f)(X)=f((1+X)^a-1)$. This action is continuous and commutes with $\phi$.

\begin{defi}
\label{defpgm}
A $(\phi,\Gamma)$-module is an $E\dpar{X}$-vector space $\dfont$ of dimension $d$, equipped with a semilinear Frobenius map $\phi : \dfont \to \dfont$ whose matrix in some basis belongs to $\GL_d(E\dpar{X})$, and a continuous semilinear action of $\Gamma$ that commutes with $\phi$.
\end{defi}

\begin{exem}
\label{pgexem}
If $\delta : \Qp^\times \to E^\times$ is a continuous character, then we define $E\dpar{X}(\delta)$ as the $(\phi,\Gamma)$-module of dimension $1$ having $e_\delta$ as a basis, where $\phi(e_\delta) = \delta(p) e_\delta$ and $[a]e_\delta = \delta(a)e_\delta$. Every $(\phi,\Gamma)$-module of dimension $1$ is then isomorphic to $E\dpar{X}(\delta)$ for a well-defined character $\delta : \Qp^\times \to E^\times$.
\end{exem}

If $\alpha(X) \in E\dpar{X}$, then we can write $\alpha(X) = \sum_{j=0}^{p-1} (1+X)^j \alpha_j(X^p) $ in a unique way, and we define a map $\psi : E\dpar{X} \to E\dpar{X}$ by the formula $\psi(\alpha)(X) = \alpha_0(X)$. A direct computation shows that if $0 \leq r \leq p-1$ then $\psi(X^{pm+r}) = (-1)^r X^m$.

If $\dfont$ is a $(\phi,\Gamma)$-module over $E\dpar{X}$ and if $y \in \dfont$, then we can write as above $y= \sum_{j=0}^{p-1} (1+X)^j \phi(y_j)$, and we set $\psi(y) = y_0$. The operator $\psi$ thus defined commutes with the action of $\Gamma$ and satisfies $\psi(\alpha(X) \phi(y)) = \psi(\alpha)(X) y$ and $\psi(\alpha(X^p)y) = \alpha(X) \psi(y)$.

\begin{defi}
\label{defpsigam}
A $(\psi,\Gamma)$-module is an $E\dcroc{X}$-module $\mfont$ of finite type, equipped with a map $\psi : \mfont \to \mfont$ such that $\psi( f(X^p) y ) = f(X) \psi(y)$, and a continuous semilinear action of $\Gamma$ that commutes with $\psi$. We say that 
\begin{enumerate}
\item $\mfont$ is surjective if $\psi : \mfont \to \mfont$ is surjective;
\item $\mfont$ is non-degenerate if $\ker(\psi : \mfont \to \mfont)$ does not contain an $E\dcroc{X}$-submodule (in other words: if $y \in \mfont$ satisfies $\psi(f(X)y)=0$ for all $f(X) \in E\dcroc{X}$, then $y=0$);
\item $\mfont$ is irreducible if it has no non-trivial sub-$(\psi,\Gamma)$-module.
\end{enumerate}
\end{defi}

Note that an irreducible $(\psi,\Gamma)$-module is surjective and non-degenerate. It is also torsion-free unless it is finite-dimensional over $E$.

\begin{theo}
\label{phtops}
If $\dfont$ is a $(\phi,\Gamma)$-module, then $\dfont$ contains a surjective sub-$(\psi,\Gamma)$-module $\mfont$ such that $\dfont = E\dpar{X} \otimes_{E\dcroc{X}} \mfont$. In addition,
\begin{enumerate}
\item if $\dfont$ is irreducible of dimension $\geq 2$, then $\mfont$ is uniquely determined;
\item if $\dfont$ is of dimension $1$, and we write $\dfont=E\dpar{X}(\delta)$, then either $\mfont=E\dcroc{X} \cdot e_\delta$ or $\mfont=X^{-1}E\dcroc{X} \cdot e_\delta$.
\end{enumerate}
\end{theo}

\begin{proof}
This is proved in \S II.4 and \S II.5 of \cite{PCmir}. Note that if $\dfont$ is of dimension $1$, then the existence of $\mfont$ and the fact that either $\mfont=E\dcroc{X}\cdot e_\delta$ or $\mfont=X^{-1}E\dcroc{X}\cdot e_\delta$ are both simple exercises. In general, Colmez constructs both a smallest and a largest such sub-$(\psi,\Gamma)$-module, denoted by $\dfont^\natural$ and $\dfont^\sharp$ respectively. He then proves (corollary II.5.21 of \cite{PCmir}) that if $\dfont$ is irreducible of dimension $\geq 2$, then $\dfont^\natural = \dfont^\sharp$.
\end{proof}

\begin{defi}
\label{mofd}
We denote by $\mfont(\dfont)$ the surjective $(\psi,\Gamma)$-module attached to an irreducible $(\phi,\Gamma)$-module $\dfont$ (if $\dfont$ is of dimension $1$, then we take $\mfont(\dfont)=E\dcroc{X}\cdot e_\delta$), so that our $\mfont(\dfont)$ is Colmez's $\dfont^\natural$.
\end{defi}

\begin{theo}\label{psitophi}
If $M$ is a surjective $(\psi,\Gamma)$-module that is non-degenerate and free over $E\dcroc{X}$, then there exists a compatible $(\phi,\Gamma)$-module structure on $\dfont = E\dpar{X} \otimes_{E\dcroc{X}} \mfont$.
\end{theo}

\begin{proof}
Let $\dfont = E\dpar{X} \otimes_{E\dcroc{X}} \mfont$ and let $\dtilde$ be $\dfont$ but with the $E\dpar{X}$-vector space structure given by $f(X) \cdot y = f(X^p)y$ so that $\dtilde$ is an $E\dpar{X}$-vector space of dimension $pd$. Let $\psi_j : \dfont \to \dfont$ be the map $y \mapsto \psi((1+X)^{-j} y)$, so that $\psi_j : \dtilde \to \dfont$ is a surjective linear map. Its kernel is therefore of dimension $pd-d$ and $\nfont = \cap_{i=1}^{p-1} \ker \psi_j$ is an $E\dpar{X}$-vector space of dimension at least $pd-(p-1)d = d$. The non-degeneracy of $\mfont$ implies that $\psi : \nfont \to \dfont$ is injective, so that $\dim \nfont=d$ and $\psi : \nfont \to \dfont$ is bijective. 

Let $\phi : \dfont \to \nfont \subset \dfont$ denote its inverse. It is easily checked that $\phi$ and $\Gamma$ give rise to a $(\phi,\Gamma)$-module structure on $\dfont$, compatible with the $(\psi,\Gamma)$-module structure on $\mfont$.
\end{proof}

We finish this section with a technical result on regularization by Frobenius. Let $R$ be a ring equipped with an automorphism $\phi$, which is extended to $R\dcroc{X}$ by $\phi(X)=X^p$.

\begin{lemm}\label{phireg}
If $P \in \GL_d(R\dcroc{X})$, then there exists a matrix $M \in \GL_d(R\dcroc{X})$ such that $M^{-1} P \phi(M) = P(0) \in \GL_d(R)$.
\end{lemm}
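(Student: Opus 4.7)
The plan is to build $M$ by successive $X$-adic approximation, writing $M = \Id + \sum_{n \geq 1} X^n N_n$ with $N_n \in \M_d(R)$. Note that $M(0) = \Id$ makes $M$ automatically invertible in $\GL_d(R\dcroc{X})$, and that $P(0) \in \GL_d(R)$ since evaluating $P P^{-1} = \Id$ at $X = 0$ shows that $P(0)$ has inverse $P^{-1}(0)$.

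Rewriting the desired identity as $P \phi(M) = M \cdot P(0)$, expanding $P = \sum_{i \geq 0} X^i P_i$ with $P_0 = P(0)$, and using $\phi(M) = \Id + \sum_{j \geq 1} X^{pj} \phi(N_j)$, I would compare the coefficient of $X^n$ on both sides to obtain
$$
\sum_{i + pj = n} P_i \phi(N_j) = N_n P_0,
$$
with the convention $N_0 = \Id$. This identity is automatic for $n = 0$. For $n \geq 1$ it can be solved for $N_n$ as
$$
N_n = \Bigl( P_n + \sum_{1 \leq j \leq n/p} P_{n - pj} \phi(N_j) \Bigr) P_0^{-1},
$$
which involves only $N_1, \ldots, N_{\lfloor n/p \rfloor}$, all with indices strictly less than $n$ since $p \geq 2$. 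The recursion therefore defines the sequence $(N_n)$ unambiguously, produces a matrix $M \in \M_d(R\dcroc{X})$ with $M(0) = \Id$, and by construction satisfies $M^{-1} P \phi(M) = P(0)$.

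The mechanism that makes the argument work is that $\phi$ multiplies $X$-adic valuations by $p$: a perturbation of order $X^n$ in $M$ appears only at order $X^{pn}$ in $\phi(M)$, which is exactly what allows us to correct the error modulo $X^{n+1}$ at each step without disturbing the lower-order coefficients already fixed. I do not expect a real obstacle here; the argument is a formal power-series construction, and the non-commutativity of matrices is harmless because $P_0^{-1}$ always appears on the right.
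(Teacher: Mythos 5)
Your proof is correct and uses essentially the same idea as the paper's: successive $X$-adic approximation, exploiting that $\phi(X)=X^p$ pushes a perturbation of order $X^n$ out to order $X^{pn}$ so the recursion closes. The only cosmetic difference is that you solve for additive corrections $N_n$ in the expansion $M = \Id + \sum_{n\geq 1}X^n N_n$ with an explicit recursion, while the paper iteratively multiplies by factors $1+X^iQ_i$ and takes the limit; these are two parametrizations of the same construction.
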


\begin{proof}
This is a standard result, which is proved by successive approximation: if there exists a matrix $M_i \in \GL_d(R\dcroc{X})$ such that $M_i^{-1} P \phi(M_i) = P(0) + P_i X^i + \mathrm{O}(X^{i+1})$ with $P_i \in \M_d(R)$ and if $Q_i = P_i P(0)^{-1}$, then 
\[ (1+X^i Q_i)^{-1} M_i^{-1} \cdot P \cdot \phi(M_i (1+X^i Q_i)) = P(0) + \mathrm{O}(X^{i+1}), \] 
so that one can set $M_{i+1} = M_i \cdot (1+X^i Q_i)$ and take $M= \lim_{i \to +\infty} M_i$.
\end{proof}

\section{Construction of Galois representations}\label{galrep}

In this section, we recall Fontaine's equivalence between $(\phi,\Gamma)$-modules and  representations of $\gal$ over finite fields. After that, we explain how to extend this equivalence to irreducible representations of $\weil$ over algebraically closed fields.

Let $\efont_{\Qp} = \Fp \dpar{X}$ and recall that if $K$ is a finite Galois extension of $\Qp$, then there exists a finite extension $\efont_K$ of $\efont_{\Qp}$ attached to it by the theory of the field of norms (see \cite{JPWcdn} and A3 of \cite{JMFpg}), and that $\gal$ acts on $\efont_K$. For example, $\gal$ acts on $\efont_{\Qp}$ by $g(f(X)) = f([\chi_{\mathrm{cycl}}(g)](X))$. We have $\efont_{\Qp}^{\mathrm{sep}} = \cup_{K / \Qp} \efont_K$ and if $\hal$ denotes the kernel of $\chi_{\mathrm{cycl}} : \gal \to \Zp^\times$, then the map $\hal \to \Gal(\efont_{\Qp}^{\mathrm{sep}}/\efont_{\Qp})$ is an isomorphism.

If $E$ is a finite field and if $\dfont$ is a $(\phi,\Gamma)$-module over $E\dpar{X}$, then $V(\dfont)=(\efont_{\Qp}^{\mathrm{sep}} \otimes_{\Fp\dpar{X}} \dfont)^{\phi=1}$ is an $E$-vector space and the group $\gal$ acts on $V(\dfont)$ by the formula $g(\alpha \otimes y) = g(\alpha) \otimes [\chi_{\mathrm{cycl}}(g)](y)$. This way, we get a functor from the category of $(\phi,\Gamma)$-modules over $E\dpar{X}$ to the category of $E$-linear representations of $\gal$. The following theorem is proved in \S 1.2 of \cite{JMFpg}.

\begin{theo}\label{fontequiv}
If $\dfont$ is a $(\phi,\Gamma)$-module over $E\dpar{X}$, then $V(\dfont)$ is an $E$-vector space of dimension $\dim(\dfont)$, and the functor $\dfont \mapsto V(\dfont)$ gives rise to an equivalence of categories between the category of $(\phi,\Gamma)$-modules over $E\dpar{X}$ and the category of $E$-linear representations of $\gal$.
\end{theo}

\begin{proof}
We give a sketch of the proof. Assume first that $E=\Fp$ and let $\dfont$ be a $(\phi,\Gamma)$-module over $\Fp\dpar{X}$. If we choose a basis of $\dfont$ and if $\Mat(\phi)=(p_{ij})_{1 \leq i,j \leq \dim(\dfont)}$ in that basis, then the algebra $A = \Fp\dpar{X}[X_1,\hdots,X_{\dim(\dfont)}] / (X_j^p - \sum_i p_{ij} X_i)_{1 \leq j \leq \dim(\dfont)}$ is an \'etale $\Fp\dpar{X}$-algebra of rank $p^{\dim(\dfont)}$ and $V(\dfont)=\operatorname{Hom}_{\Fp\dpar{X}-\text{algebra}}(A,\Fp\dpar{X}^{\text{sep}})$ so that $V(\dfont)$ is an $\Fp$-vector space of dimension $\dim(\dfont)$. 

Given the isomorphism $\hal \simeq \operatorname{Gal}(\Fp\dpar{X}^{\text{sep}} / \Fp\dpar{X})$, Hilbert's theorem 90 tells us that $\mathrm{H}^1_{\text{discrete}}(\hal, \GL_d(\Fp\dpar{X}^{\text{sep}})) = \{ 1 \}$ if $d \geq 1$. If $V$ is an $\Fp$-linear representation of $\hal$ then $\Fp\dpar{X}^{\text{sep}} \otimes_{\Fp} V \simeq  (\Fp\dpar{X}^{\text{sep}})^{\dim(V)}$ as representations of $\hal$ so that the $\Fp\dpar{X}$-vector space $\dfont(V) = (\Fp\dpar{X}^{\text{sep}} \otimes_{\Fp} V)^{\hal}$ is of dimension $\dim(V)$ and $V= (\Fp\dpar{X}^{\text{sep}} \otimes_{\Fp\dpar{X}} \dfont(V))^{\phi=1}$. 

It is then easy to check that the functors $V \mapsto \dfont(V)$ and $\dfont \mapsto V(\dfont)$ are inverse of each other. Finally, if $E\neq \Fp$ then one can consider an $E$-linear representation as an $\Fp$-linear representation with an $E$-linear structure and likewise for $(\phi,\Gamma)$-modules, so that the equivalence carries over.
\end{proof}

For example, if $\delta$ is a character of $\Qp^\times$, then the representation arising from $E\dpar{X}(\delta)$ is the character of $\gal$ corresponding to $\delta$ by local class field theory.

If $E$ is not a finite extension of $\Fp$, then theorem \ref{fontequiv} above may well fail. Suppose for instance that $E=\Fp(t)$ and that $\dfont = E\dpar{X} (\delta)$ where $\delta(p)=t$ and $\delta |_{\Zp^\times} = 1$. This $(\phi,\Gamma)$-module ``should'' correspond to the unramified character of $\gal$ sending $\Frob_p$ to $t^{-1}$, but there is no such character because the map $n \mapsto t^{-n}$ does not extend to $\hat{\ZZ}$. There is however such a character of the Weil group $\weil$ of $\Qp$ and in the rest of this section, we construct a bijection between the set of irreducible representations of $\weil$ and the set of irreducible $(\phi,\Gamma)$-modules over $E\dpar{X}$, for any algebraically closed field $E$.

Assume for the rest of this section that $E$ is an algebraically closed field of characteristic $p$. We first explain how to attach an irreducible $(\phi,\Gamma)$-module over $E\dpar{X}$ to an irreducible $E$-linear representation of $\weil$. If $\lambda \in E^\times$, let $\mu_\lambda : \weil \to E^\times$ denote the character defined by $g \mapsto \lambda^{-n(g)}$. Take $n \geq 1$ and let $h \in \ZZ$ be an integer. Let $W = \{ \alpha \in \Fpbar$ such that $\alpha^{p^n} = (-1)^{n-1} \alpha\}$ so that $W$ is a $\Fpn$-vector space of dimension $1$ and hence a $\Fp$-vector space of dimension $n$. Choose $\pi_n \in \Qpbar$ such that $\pi_n^{p^n-1}= -p$. By composing the map $\Gal(\Qp^\text{nr}(\pi_n) / \Qp) \xrightarrow{\sim} \Fpn^\times \rtimes \hat{\ZZ}$ with the map $\Fpn^\times \rtimes \hat{\ZZ} \to \operatorname{End}_{\Fp}(W)$ given by $(x,0) \mapsto m_x^h$ (where $m_x$ is the multiplication by $x$ map) and by $(1,1) \mapsto (\alpha \mapsto \alpha^p)$, we make $W$ into an $n$-dimensional $\Fp$-linear representation of $\gal$ which we call $\ind(\omega_n^h)$. Its determinant is $\omega^h$ where $\omega : \gal \to \Fp^\times$ is the mod $p$ cyclotomic character. 

If $h$ is not divisible by any of the $(p^n-1)/(p^d-1)$ for $d<n$ dividing $n$ (we then say that $h$ is primitive), then $\ind(\omega_n^h)$ is irreducible and it is uniquely determined by the two conditions $\det \ind(\omega_n^h) = \omega^h$ and $\ind(\omega_n^h) |_{\inert} =  \oplus_{i=0}^{n-1} \omega_n^{p^i h}$ where $\omega_n : \inert \to \Fpbar^\times$  is one of Serre's fundamental characters of level $n$, given by $\omega_n(g) = g(\pi_n)/\pi_n \bmod{p}$ (see for example \cite{JPSof} and \S 2.1 of \cite{LBmod}).

\begin{prop}\label{classrepw}
If $V$ is an irreducible $n$-dimensional $E$-linear representation of $\weil$, then there exists $h \in \ZZ$ and $\lambda \in E^\times$ such that $V = (E \otimes_{\Fp} \ind (\omega_n^h)) \otimes \mu_\lambda$.
\end{prop}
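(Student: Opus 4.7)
The plan is to apply Clifford theory to the normal subgroup $\inert \subset \weil$, relying at the start on the fact that a pro-$p$ group acting smoothly on a finite-dimensional $E$-vector space in characteristic $p$ always has a non-zero fixed subspace.

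Let $\mathcal{P}$ denote the wild inertia. Since $\mathcal{P}$ is pro-$p$ and normal in $\weil$, the subspace $V^{\mathcal{P}}$ is a non-zero $\weil$-subrepresentation of $V$ and hence equals $V$ by irreducibility. Thus $\inert$ acts through its tame quotient, in particular through a finite abelian quotient of order prime to $p$, and $V|_\inert$ decomposes over the algebraically closed field $E$ as a direct sum of characters. Let $\chi_1$ be one of these characters and let $H \subset \weil$ denote its stabilizer under conjugation. By Clifford theory, $V \cong \mathrm{Ind}_H^\weil(V_{\chi_1})$ where $V_{\chi_1}$ is the $\chi_1$-isotypic part and is an irreducible $H$-representation. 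The image of $H$ in $\weil/\inert \cong \ZZ$ is a non-zero subgroup, so $H$ is the Weil group of the unramified extension of $\Qp$ of degree $n$ for some $n \geq 1$. Writing $V_{\chi_1}$ as an extension of $\chi_1$ to $H$ tensored with a representation of $H/\inert \cong \ZZ$, the latter must be one-dimensional since $E$ is algebraically closed, hence $\dim V_{\chi_1} = 1$ and $n = \dim V$.

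Next, $\chi_1$ has order prime to $p$ and Frobenius-orbit of length exactly $n$, so it takes values in $\Fpbar^\times$ and must be of the form $\omega_n^h$ for some primitive $h$, the primitivity condition being precisely what makes the orbit length equal to $n$. The construction recalled in the excerpt realises $E \otimes_{\Fp} \ind(\omega_n^h)$ as $\mathrm{Ind}_H^\weil(\tilde\chi)$ for the specific extension $\tilde\chi$ of $\omega_n^h$ to $H$ which sends a degree-$n$ Frobenius to the action $\alpha \mapsto \alpha^{p^n} = (-1)^{n-1}\alpha$ on $W$.

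On the other hand, $V$ itself is of the form $\mathrm{Ind}_H^\weil(\tilde\chi_1)$ for some extension $\tilde\chi_1$ of $\omega_n^h$, and the two extensions $\tilde\chi_1$ and $\tilde\chi$ differ by an unramified character $\eta$ of $H$. Because $E$ is algebraically closed and perfect of characteristic $p$, every element of $E^\times$ admits an $n$-th root, so $\eta$ is the restriction of an unramified character $\mu_\lambda$ of $\weil$ for some $\lambda \in E^\times$. The projection formula $\mathrm{Ind}_H^\weil(A \otimes \mathrm{Res}_H B) \cong \mathrm{Ind}_H^\weil(A) \otimes B$ then produces the desired isomorphism $V \cong (E \otimes_{\Fp} \ind(\omega_n^h)) \otimes \mu_\lambda$. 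The subtlest part is this last step: explicitly identifying $E \otimes_{\Fp} \ind(\omega_n^h)$ with an induced module and verifying that the required unramified twist of $H$ indeed extends to a character of $\weil$.
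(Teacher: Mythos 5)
Your proposal is correct and follows essentially the same route as the paper's proof: both decompose $V|_{\inert}$ into tame characters (the paper citing Serre, you re-deriving it via the wild pro-$p$ inertia argument), both observe that irreducibility forces a Frobenius orbit of length $n$ giving $\omega_n^h$ with $h$ primitive, and both conclude by inducing from the Weil group of $\QQ_{p^n}$ and absorbing the unramified discrepancy via Frobenius reciprocity. Your version usefully makes explicit the point the paper leaves tacit, namely that the algebraic closedness of $E$ is what supplies the $n$-th root needed to write the unramified twist on $\mathcal{W}_{\QQ_{p^n}}$ as the restriction of some $\mu_\lambda$ on $\weil$.
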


\begin{proof}
The proof is the same as in \S 2.1 of \cite{LBmod}: by \S 1.6 of \cite{JPSof}, $V|_{\inert}$ splits as a direct sum of $n$ tame characters and since $V$ is irreducible, these characters are  transitively permuted by Frobenius, so that they are of level $n$. Therefore, there exists a primitive $h$ such that $V = \oplus_{i=0}^{n-1} V_i$ where $\inert$ acts on $V_i$ by $\omega_n^{p^i h}$. Since $\omega_n$ extends to $\Gal(\Qpbar/\QQ_{p^n})$, each $V_i$ is stable under the Weil group of $\Gal(\Qpbar/\QQ_{p^n})$, which then acts on $V_i$ by $\omega_n^{p^i h} \chi_i$ where $\chi_i$ is an unramified character. The lemma then follows from Frobenius reciprocity. 
\end{proof}

\begin{defi}\label{defwp}
To $V = (E \otimes_{\Fp} \ind (\omega_n^h)) \otimes \mu_\lambda$, we then attach the $(\phi,\Gamma)$-module $\dfont(V)$ having a basis $e_0,\hdots,e_{n-1}$ in which $[a](e_j) =  (aX/[a](X)) ^{hp^j(p-1)/(p^n-1)} e_j$ if $a \in \Zp^\times$ and $\phi(e_j) = e_{j+1}$ for $0 \leq j \leq n-2$ and $\phi(e_{n-1}) = (-1)^{n-1} \lambda^n X^{-h(p-1)} e_0$.
\end{defi}

Different choices of $h$ and $\lambda$ can give rise to the same representation $V$, but we can check that the $(\phi,\Gamma)$-module  $\dfont(V)$ thus defined depends only on $V$. Indeed, if  $\lambda \in \Fpbar$, then $(E \otimes_{\Fp} \ind (\omega_n^h)) \otimes \mu_\lambda$ extends to $\gal$ and the $(\phi,\Gamma)$-module above is the extension of scalars of the one given by Fontaine's construction, by the results of \S 2.1 of \cite{LBmod}.

We now explain how to attach an irreducible representation of $\weil$ to an irreducible $(\phi,\Gamma)$-module over $E\dpar{X}$. Let $F$ be a field that is complete for a discrete valuation $\val$ and endowed with an automorphism $\phi$, such that $\val(\phi(y)) = p \cdot \val(y)$ (in the sequel, we'll have $F=E\dpar{Y}$ where $Y^n=X$ and $\val=\vx$). Let $F\{\phi\}$ denote the non-commutative ring of polynomials in $\phi$ with coefficients in $F$. If $P(\phi) = a_0 + a_1 \phi + \cdots + a_n \phi^n \in F\{\phi\}$, then the Newton polygon $\NP(P)$ of $P$ is the polygon whose support consists of the points $([k],\val(a_k))$ where $[k] = (p^k-1)/(p-1)$. The slopes of $\NP(P)$ are the opposites of the slopes of the segments of the polygon. If $P(\phi) = a_0 + a_1 \phi + \cdots + a_n \phi^n \in F\{\phi\}$, and if $y \in F$, then $P(\phi)y =  a_0 y + a_1 \phi(y) \phi  + \cdots + a_n \phi^n(y) \phi^n$.

\begin{lemm}
\label{chgsl}
If $P(\phi) \in F\{\phi\}$ is isoclinic of slope $s$, and if $y \in F$ satisfies $\val(y)=r$, then $P(\phi)y$ is isoclinic of slope $s-(p-1)r$.
\end{lemm}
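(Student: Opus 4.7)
The plan is to unpack the definition of isoclinic, compute the valuations of the coefficients of $P(\phi)y$, and check that the Newton polygon of $P(\phi)y$ is again a single segment, this time of slope $s-(p-1)r$. The calculation rests entirely on the identity $(p-1)\cdot [k] = p^k - 1$.

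First I would write out what isoclinic of slope $s$ means. Since $\NP(P)$ is a single segment of slope $-s$ joining $(0,\val(a_0))$ to $([n],\val(a_n))$, we have $\val(a_n) - \val(a_0) = -s\,[n]$, and for every $k$ with $0\leq k\leq n$ (and $a_k\neq 0$) the inequality
\[
\val(a_k) \geq \val(a_0) - s\,[k]
\]
holds, with equality at $k=0$ and $k=n$.

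Next, using the multiplication rule $P(\phi)y = \sum_k a_k \phi^k(y)\,\phi^k$ given in the paper and the hypothesis $\val(\phi(z)) = p\cdot \val(z)$, the coefficient of $\phi^k$ in $P(\phi)y$ has valuation $\val(a_k) + p^k r$. I would then compute the slope of the segment joining the two extreme support points $(0,\val(a_0)+r)$ and $([n],\val(a_n)+p^n r)$ of the new polygon. Its slope equals
\[
\frac{\val(a_n) - \val(a_0) + (p^n-1)r}{[n]} = -s + \frac{(p^n-1)r}{[n]} = -s + (p-1)r,
\]
using $[n] = (p^n-1)/(p-1)$. Taking opposites as in the definition, this gives Newton polygon slope $s-(p-1)r$, exactly as claimed.

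It remains to verify that every intermediate support point $([k],\val(a_k)+p^k r)$ lies on or above this segment, so that the new polygon is again a single segment. The inequality to check,
\[
\val(a_k) + p^k r \geq \val(a_0) + r - \bigl(s-(p-1)r\bigr)[k],
\]
simplifies, after using $(p-1)[k] = p^k - 1$ to cancel all $r$-terms, to $\val(a_k) \geq \val(a_0) - s[k]$, which is precisely the isoclinic hypothesis on $P$. There is no real obstacle here; the only place that requires care is tracking the sign convention (slopes of $\NP(P)$ are the opposites of the slopes of its segments), and the only algebraic input beyond that is the elementary identity $(p-1)[k] = p^k - 1$.
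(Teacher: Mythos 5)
Your proof is correct and follows essentially the same route as the paper: both rest on the computation $\val(a_k\,\phi^k(y)) = \val(a_k) + p^k r$ together with the identity $(p-1)[k] = p^k-1$, which shows that every slope in the Newton polygon of $P(\phi)y$ is the corresponding slope of $\NP(P)$ shifted by $(p-1)r$. The paper phrases this as a single formula for the slope between any two support points, whereas you split the argument into the endpoint segment plus the convexity check at intermediate indices; the two presentations are equivalent.
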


\begin{proof}
We have $\val(\phi^k(y) a_k) = p^k \val(y) + \val(a_k)$ so that
\[ \frac{\val(\phi^k(y) a_k) - \val(\phi^j(y) a_j)}{[k]-[j]} = \frac{\val(a_k) - \val(a_j)}{[k]-[j]} + (p-1)r. \]
\end{proof}

\begin{prop}
\label{splitsl}
If $P(\phi) \in F\{\phi\}$ is irreducible, then it is isoclinic.
\end{prop}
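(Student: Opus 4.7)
The plan is to prove the contrapositive: if $\NP(P)$ has at least two distinct slopes, then $P$ admits a nontrivial factorization $P = B \cdot A$ in $F\{\phi\}$, contradicting irreducibility.

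Since $\NP(P)$ has more than one slope, it has an interior vertex at $([m], v_m)$ for some $0 < m < n$. The proof of lemma \ref{chgsl} adapts verbatim to show that right-multiplication by $y \in F^\times$ shifts every slope of $\NP(P)$ uniformly by $-(p-1)\val(y)$, regardless of whether $P$ is isoclinic. After possibly passing to a finite totally ramified extension of $F$ to enrich the value group (which does not affect irreducibility) and rescaling by a suitable unit on the left, I may assume that the slope of $\NP(P)$ immediately to the right of $(m, v_m)$ is zero and $v_m = 0$. Then $\val(a_m) = \val(a_n) = 0$, $\val(a_k) \geq 0$ for all $k$, and $\val(a_k) > 0$ for every $k < m$, since those points lie strictly above $\NP(P)$.

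Next I lift the reduction $\bar P = \sum_{k=m}^n \bar a_k \phi^k = \bar Q \cdot \phi^m$ over the residue field $\bar F$ of $F$, where $\bar Q = \sum_{j=0}^{n-m} \bar a_{m+j} \phi^j$ has $\phi$-degree $n-m$ and non-zero constant term $\bar a_m \in \bar F^\times$, to a factorization $P = B \cdot A$ in $F\{\phi\}$ with $\deg A = m$, $\deg B = n-m$, $A \equiv \phi^m$ and $B \equiv \bar Q$ modulo the maximal ideal of $F$. This is done by non-commutative Hensel iteration in the spirit of lemma \ref{phireg}: at each step the correction is obtained by solving a linear equation $B^{(i)} X + Y A^{(i)} = R$ in $F\{\phi\}$ with $\deg X < m$ and $\deg Y < n-m$, whose solvability reduces modulo the maximal ideal to the coprimality of $\phi^m$ and $\bar Q$ in $\bar F\{\phi\}$ (the latter has non-zero constant term, hence shares no left factor with any monomial in $\phi$). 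Completeness of $F$ for the $\val$-topology gives convergence of the iteration.

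The main obstacle is setting up this non-commutative Hensel iteration rigorously, in particular establishing a non-commutative B\'ezout identity between $\phi^m$ and $\bar Q$ in $\bar F\{\phi\}$ with solutions of the correct $\phi$-degree bounds, and controlling the valuation growth of the successive corrections so that the limits $A, B \in F\{\phi\}$ actually have the claimed degrees and Newton polygons. Once this is done, the factorization $P = B \cdot A$ with $0 < \deg A = m$ and $0 < \deg B = n-m$ directly contradicts the irreducibility of $P$ and completes the proof.
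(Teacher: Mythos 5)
Your route is genuinely different from the paper's. You normalize the Newton polygon so that the segment to the right of the breakpoint has slope zero and $\val(a_m)=0$, reduce modulo the maximal ideal, and lift the factorization $\overline{P}=\overline{Q}\cdot\phi^m$ by a non-commutative Hensel iteration inside $F\{\phi\}$. The paper (following Kedlaya) instead introduces the weighted valuation $\vr(P)=\min_i(\val(a_i)+r[i])$ on the left $F\{\phi\}$-module $F\{\phi^{\pm 1}\}$ of Laurent polynomials, proves by successive approximation that any $R$ with $\vr(R-1)>0$ factors as $PQ$ with $P\in F\{\phi\}$ and $Q\in F\{\phi^{-1}\}$, and then deduces the existence of a factorization at a breakpoint. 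The weighted valuation lets one work at an arbitrary slope $r$, so no normalization of the slope --- and in particular no enlargement of $F$ --- is required.

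That difference exposes the concrete gap in your argument: the parenthetical claim that passing to a finite totally ramified extension $F'$ of $F$ ``does not affect irreducibility'' is backwards. Irreducibility of $P$ over $F$ does not imply irreducibility over $F'$ --- indeed you are about to exhibit a nontrivial factorization of $P$ in $F'\{\phi\}$. So the factorization $P=BA$ you produce lives over $F'$, and without a descent argument it does not contradict irreducibility of $P$ over $F$. Such a descent would require, say, a uniqueness statement for the slope factorization together with equivariance under automorphisms of $F'/F$; but in characteristic $p$ a totally ramified extension may be inseparable, so this is genuinely delicate and cannot simply be waved away. The $\vr$-machinery on $F\{\phi^{\pm 1}\}$ is precisely what lets the paper avoid enlarging $F$. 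Your other flagged obstacle --- the degree and valuation bookkeeping in the Hensel step --- is real but of the routine sort: the triangular structure of $\overline{Q}X+Y\phi^m=R$ lets one solve coefficient by coefficient without ever inverting $\phi$, and $\val(\phi^j(x))=p^j\val(x)$ makes the error decay faster than quadratically; the paper likewise only sketches this, deferring to Kedlaya and to \cite{MVphd}.
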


\begin{proof}
See \S 2.4 of \cite{KKrel} as well as \cite{MVphd}. We give here a sketch of the proof. Let $F\{\phi^{\pm 1}\}$ be the space of polynomials in $\phi$ and $\phi^{-1}$. Since $\phi : F \to F$ is not necessarily invertible, $F\{\phi^{\pm 1}\}$ is not a ring, but it is a left $F\{\phi\}$-module. If $r \in \RR$ and $P \in F\{\phi^{\pm 1}\}$, let $\vr(P)= \min_{i \in \ZZ} (\val(a_i) + r[i])$. Using successive approximations, we can show that if $R \in F\{\phi^{\pm 1}\}$ and $r \in \RR$ are such that $\vr(R-1)>0$, then there exists $P \in F\{\phi\}$ and $Q \in F\{\phi^{-1}\}$ such that $R=PQ$. Using this factorization result, we can now prove that if $P \in F\{\phi\}$ and $\NP(P)$ has a breakpoint, then $P$ can be factored in $F\{\phi\}$.
\end{proof}

Note that in general, if $P=P_1P_2$, then the set of slopes of $\NP(P)$ is not the union of the sets of slopes of $\NP(P_1)$ and $\NP(P_2)$. 

We denote by $\vx$ the $X$-adic valuation on $\efont_K$, by $\efont_K^+$ the ring of integers of $\efont_K$ for $\vx$ and by $k_K$ the residue field of $\efont_K$ (it is the residue field of $K(\mu_{p^\infty})$).

\begin{prop}
\label{bascal}
If $\dfont$ is an irreducible $(\phi,\Gamma)$-module over $E\dpar{X}$, then there exists a finite extension $K$ of $\Qp$, such that $\efont_K \otimes_{\efont_{\Qp}} \dfont$ has a basis in which $\Mat(\phi)$ belongs to $\GL_d(k_K \otimes_{\Fp} E)$.

The $k_K \otimes_{\Fp} E$-module generated by this basis depends only on $\dfont$, and in particular it is stable under the action of $\gal$ given by $g(\alpha \otimes y) = g(\alpha) \otimes [\chi_{\mathrm{cycl}}(g)](y)$. 
\end{prop}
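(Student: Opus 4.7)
The plan is to reduce $\Mat(\phi)$ to constant coefficients in two stages: first use the slope formalism developed in this section to show $\dfont$ is isoclinic, then trivialize the slope by a base change to a suitable $\efont_K$ and apply the regularization Lemma~\ref{phireg}.

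To prove $\dfont$ is isoclinic, I would pick a cyclic vector for $\phi$ on $\dfont$ (which exists because $E\dpar{X}$ is an infinite field) and write $\dfont \cong E\dpar{X}\{\phi\}/E\dpar{X}\{\phi\}\cdot P$ for some non-commutative polynomial $P$ of degree $d$. Factoring $P = P_1 \cdots P_r$ into irreducibles and applying Proposition~\ref{splitsl} to each factor produces a filtration of $\dfont$ by $\phi$-submodules whose successive quotients are isoclinic; reordering the factors by decreasing slope yields the canonical slope filtration, which is therefore preserved by $\Gamma$. Since $\dfont$ is irreducible as a $(\phi,\Gamma)$-module, this filtration must be trivial, so all the factors share the same slope and $\dfont$ is isoclinic of some $s = h/n \in \QQ$ with $n \geq 1$ and $(h,n) = 1$.

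Next, I would pick $K/\Qp$ finite such that $\efont_K$ has ramification index over $\efont_{\Qp}$ divisible by $n$, the residue field $k_K$ contains the $n$-th roots of unity, and $\efont_K$ contains an element $Y$ with $\phi(Y) = Y^p$ and $\vx(Y) = 1/n$. Such a $K$ exists: take for instance the compositum of a suitable tamely ramified extension of degree $n$ with the unramified extension $\QQ_{p^n}$. Setting $\dfont_K = \efont_K \otimes_{\efont_{\Qp}} \dfont$, I twist the cyclic basis by the diagonal matrix $\mathrm{diag}(1, Y^{-h}, \ldots, Y^{-(d-1)h})$; by Lemma~\ref{chgsl} the associated non-commutative polynomial becomes isoclinic of slope $0$, so translating back, $\Mat(\phi)$ lies in $\GL_d(\efont_K^+ \otimes_{\Fp} E)$. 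Finally, Lemma~\ref{phireg} applied with $R = k_K \otimes_{\Fp} E$ (equipped with $\phi = \Frob_{k_K} \otimes \mathrm{id}_E$) provides one last change of basis that brings $\Mat(\phi)$ into $\GL_d(k_K \otimes_{\Fp} E)$.

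For the second assertion, I would characterize the $k_K \otimes E$-module $L$ so produced intrinsically — for example, as the unique $\phi$-stable, free $k_K \otimes E$-submodule of rank $d$ in $\dfont_K$ on which $\phi$ acts, after the slope-trivializing twist, through $\GL_d(k_K \otimes E)$ — so that $L$ depends only on $\dfont$ and is automatically stable under the natural action of $\gal$ on $\efont_K \otimes \dfont$ coming from its action on $\efont_K$. The main obstacle, I expect, is matching the Newton-polygon normalization of Proposition~\ref{splitsl} (with abscissae $[k] = (p^k - 1)/(p-1)$) with the valuation on $\efont_K$ so that the twist by powers of $Y$ exactly kills the slope, and pinning down the intrinsic description of $L$ precisely enough that $\gal$-stability follows automatically.
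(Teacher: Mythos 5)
Your plan shares the slope formalism with the paper (Newton polygons, Lemma~\ref{chgsl}, Lemma~\ref{phireg}), but your route to the isoclinic case has gaps that the paper's reduction is specifically designed to avoid. The paper does not look for a cyclic vector on all of $\dfont$: it first passes to an irreducible sub-$\phi$-module $\mfont$, writes $\dfont = \bigoplus_{i=1}^n \gamma_i(\mfont)$ with $\gamma_i \in \Gamma$ and $n$ minimal (so the sum is direct, each summand being $\phi$-irreducible), and treats each $\gamma_i(\mfont)$ separately. In a $\phi$-irreducible module every non-zero vector is cyclic and its annihilating polynomial in $E\dpar{X}\{\phi\}$ is automatically irreducible, so Proposition~\ref{splitsl} gives isoclinicity at once. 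Your argument instead asserts that a cyclic vector exists ``because $E\dpar{X}$ is infinite,'' which is not a valid justification for semilinear operators, and then wants to read off the slope filtration from a factorization $P = P_1\cdots P_r$. But the paper's remark immediately following Proposition~\ref{splitsl} warns precisely that the set of slopes of a product is \emph{not} the union of the slopes of the factors, so the step ``reorder by slope to get the canonical slope filtration'' needs Kedlaya's full slope-filtration theorem rather than the proposition you cite. Your $\Gamma$-stability observation is correct once the canonical filtration is in hand, but the paper's reduction is more economical and self-contained. Finally, your twist normalization is off: Lemma~\ref{chgsl} changes the slope by $(p-1)\vx(y)$, so the paper simply replaces the cyclic vector $m$ by $ym$ with $\vx(y) = s/(p-1)$; the diagonal twist by powers of a uniformizer $Y$ of valuation $1/n$ that you propose does not do the same thing, as you yourself suspect.

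The second assertion is where the proposal is weakest: ``characterize $L$ intrinsically'' is exactly what has to be proved, and the paper does it by a short computation. If $B$ is the change of basis between two bases in which $\Mat(\phi)$ equals constant matrices $P_1, P_2 \in \GL_d(k_K \otimes_{\Fp} E)$, then $\phi(B) = P_1^{-1} B P_2$; since left and right multiplication by constant invertible matrices preserve $\vx$ while $\phi$ multiplies it by $p$, one gets $\vx(B)=0$ (and likewise for $B^{-1}$), so $B$ has integral entries. Writing $B = B_0 + C$ with $B_0$ constant and $\vx(C) > 0$, the same identity applied to $C$ forces $\vx(C) = +\infty$, i.e.\ $C = 0$, so the two bases generate the same $k_K \otimes_{\Fp} E$-module $L$. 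Since $\gal$ commutes with $\phi$ on $\efont_K \otimes_{\efont_{\Qp}} \dfont$ and preserves the Teichm\"uller copy of $k_K$ inside $\efont_K$, applying $g \in \gal$ to a good basis produces another good basis, and uniqueness then gives $g(L)=L$. You should supply this computation rather than leave it as a promissory note.
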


\begin{proof}
Let us first show that the $k_K \otimes_{\Fp} E$-module generated by such a basis is unique. If $M \in \M_d(\efont_K \otimes_{\efont_{\Qp}} E \dpar{X})$, then let $\vx(M)$ be the minimum of the valuations of the entries of $M$.

If $\efont_K \otimes_{\efont_{\Qp}} \dfont$ admits two bases in which $\Mat(\phi) \in \GL_d(k_K \otimes_{\Fp} E)$, then let $P_1$ and $P_2$ be the two matrices of $\phi$ and let $B \in \GL_d(\efont_K \otimes_{\efont_{\Qp}} E \dpar{X})$ be the change of basis matrix. We then have $P_2 = B^{-1} P_1 \phi(B)$ so that $\phi(B) = P_1^{-1} B P_2$. This implies that $\vx(\phi(B)) = \vx(B)$ so that $\vx(B) = 0$, and hence $B \in \M_d(\efont^+_K \otimes_{\efont^+_{\Qp}} E \dcroc{X})$. The same argument applied to $B^{-1}$ shows that $B \in \GL_d(\efont^+_K \otimes_{\efont^+_{\Qp}} E \dcroc{X})$. If we write $B=B_0 + C$ where $B_0 \in \GL_d(k_K \otimes_{\Fp} E)$ and $\vx(C)>0$, then the formula $\phi(B) = P_1^{-1} B P_2$ implies likewise that $\vx(C) = +\infty$ so that $C=0$. The $k_K \otimes_{\Fp} E$-module generated by these two bases is therefore the same.

We now show the existence of such a basis. We can assume that $\dfont$ is irreducible as a $\phi$-module; indeed, if $\mfont$ is an irreducible sub-$\phi$-module of $\dfont$, then we can write $\dfont=\sum_{i=1}^n \gamma_i(\mfont)$ with $\gamma_i \in \Gamma$. We can assume that $n$ is minimal, so that the sum is direct and the existence result for $\dfont$ follows from the result for each of the $\phi$-modules $\gamma_i(\mfont)$.

If $m \in \dfont$ is non-zero, then it generates $\dfont$ as an $E\dpar{X}\{\phi\}$-module since $\dfont$ is assumed to be irreducible. Let $P(\phi)$ be a non-zero polynomial of degree $\dim \dfont$ such that $P(\phi)(m)=0$. If $P(\phi)$ were reducible, then this would correspond to a non-trivial sub-$\phi$-module of $\dfont$ so that $P(\phi)$ is irreducible and by proposition \ref{splitsl}, $P(\phi)$ is isoclinic. If $s$ is the slope of $P(\phi)$, then there exists a finite extension $K$ of $\Qp$ and an element $y \in \efont_K$ of valuation $s/(p-1)$. Lemma \ref{chgsl} shows that if we replace $m$ by $ym$, then the resulting polynomial $Q(\phi)$ is isoclinic of slope $0$. This implies that there exists a basis of $\efont_K \otimes_{\efont_{\Qp}} \dfont$ in which $\Mat(\phi) \in \GL_d((k_K \otimes_{\Fp} E) \dcroc{Y})$. Lemma \ref{phireg} now implies that there exists a basis of $\efont_K \otimes_{\efont_{\Qp}} \dfont$ in which $\Mat(\phi) \in \GL_d(k_K \otimes_{\Fp} E)$, which is the sought-after result.
\end{proof}

Let $\dfont$ be an irreducible $(\phi,\Gamma)$-module over $E\dpar{X}$, and let $K$ be as above. Since $E$ is algebraically closed, we have $k_K \otimes_{\Fp} E = E^n$ with $n=[k_K:\Fp]$. We denote by $\pi_k : E^n \to E$ the projection on the $k$-th factor. Let $V_K(\dfont)$ be the $E^n$-module generated by the basis afforded by proposition \ref{bascal}. This module is stable under $\gal$ which acts by $k_K$-semilinear automorphisms. We define an action of $\weil$ on $V_K(\dfont)$ by $\rho(g)(y) = \phi^{-n(g)}(g(y))$. This action is now $E^n$-linear, and commutes with $\phi$. In particular, $V_K(\dfont) = \pi_1 V_K(\dfont) \oplus \cdots \oplus \pi_n V_K(\dfont)$ and $\phi(\pi_k V_K(\dfont)) = \pi_{k+1} V_K(\dfont)$ (with $\pi_{n+1}=\pi_1$) so that all the representations $\pi_k V_K(\dfont)$ are isomorphic. We let $V(\dfont) = \pi_1 V_K(\dfont)$.

\begin{prop}
\label{vdirred}
The representation $V(\dfont)$ defined above is irreducible.
\end{prop}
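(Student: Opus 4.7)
The plan is to argue by contradiction: from any proper nonzero $\rho(\weil)$-stable $E$-subspace $W \subset V(\dfont)$, I will produce a proper nonzero sub-$(\phi,\Gamma)$-module of $\dfont$. The key observation is that $\phi^n$ preserves $V(\dfont) = \pi_1 V_K(\dfont)$ and commutes with $\rho(\weil)$, since $\phi$ commutes with $\gal$ and $\rho(g) = \phi^{-n(g)} g$. The first and most delicate step is to arrange that $W$ is also $\phi^n$-stable. Using that $E$ is algebraically closed and $V(\dfont)$ is finite-dimensional, I decompose $V(\dfont) = \bigoplus_\lambda V(\dfont)_\lambda$ into generalized eigenspaces for $\phi^n$; each summand is automatically $\rho(\weil)$-stable. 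If more than one eigenvalue occurs, a single generalized eigenspace works; otherwise there is a unique $\lambda$, in which case either $\ker(\phi^n - \lambda)$ is a proper nonzero $\rho(\weil)$- and $\phi^n$-stable subspace, or $\phi^n = \lambda \cdot \Id$ on $V(\dfont)$ and the original $W$ is automatically $\phi^n$-stable.

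Given such a $W$, I form $\widehat W := W \oplus \phi(W) \oplus \cdots \oplus \phi^{n-1}(W) \subset V_K(\dfont)$, which is a direct sum because $\phi^k(W) \subset \pi_{k+1} V_K(\dfont)$ (indices modulo $n$). The $\phi^n$-stability of $W$ makes $\widehat W$ stable under $\phi$. For $g \in \weil$, the relation $g = \phi^{n(g)} \rho(g)$ on $\pi_1 V_K(\dfont)$ together with $\rho(g)(W) = W$ gives $g(\phi^k(W)) = \phi^{k+n(g)}(\rho(g)(W)) \subset \widehat W$ after reducing the exponent modulo $n$; density of $\weil$ in $\gal$ (acting continuously on the discrete $V_K(\dfont)$) extends this to full $\gal$-stability. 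I then set $N := \efont_K \cdot \widehat W \subset \efont_K \otimes_{\efont_{\Qp}} \dfont$ and $\dfont_W := N^{\hal} \subset (\efont_K \otimes \dfont)^{\hal} = \dfont$. Finite Galois descent for $\efont_K/\efont_{\Qp}$ gives $N = \efont_K \otimes_{\efont_{\Qp}} \dfont_W$, and $\dfont_W$ is $\phi$-stable (since $N$ is and $\phi$ commutes with $\hal$) and $\Gamma$-stable (since $\hal \trianglelefteq \gal$ with quotient $\Gamma$), hence is a sub-$(\phi,\Gamma)$-module of $\dfont$.

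Since $W \neq 0$, we have $\dfont_W \neq 0$, and irreducibility of $\dfont$ forces $\dfont_W = \dfont$, so $N = \efont_K \otimes_{\efont_{\Qp}} \dfont$. Decomposing by the idempotents $\pi_k$ of $k_K \otimes_{\Fp} E$ and comparing $E$-dimensions in each factor forces $\phi^{k-1}(W) = \pi_k V_K(\dfont)$ for every $k$, and taking $k = 1$ gives $W = V(\dfont)$, contradicting properness. The main obstacle I expect is the first step: without the eigenvalue dichotomy provided by $E$ being algebraically closed, the naive saturation $\sum_{j \geq 0} \phi^{nj}(W)$ can fill all of $V(\dfont)$ while $W$ itself remains proper; then $\widehat W$ would already equal $V_K(\dfont)$ and the descent argument would give no contradiction. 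The $\phi^n$-stability of $W$ is precisely what ensures $W = \widehat W \cap \pi_1 V_K(\dfont)$, so that the descent from $\widehat W = V_K$ really pushes back to $W = V(\dfont)$.
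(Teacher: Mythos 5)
Your proof is correct and uses the paper's core strategy: build the $\phi$-orbit $W \oplus \phi(W) \oplus \cdots \oplus \phi^{n-1}(W) \subset V_K(\dfont)$ from a $\weil$- and $\phi^n$-stable subspace $W$ of $V(\dfont)$, observe it is $\gal$- and $\phi$-stable, and apply Galois descent to produce a sub-$(\phi,\Gamma)$-module of $\dfont$. The only (minor) difference is at the start: the paper argues directly, taking $W$ to be an irreducible $\weil$-subrepresentation of the eigenspace $V(\dfont)^{\phi^n = \lambda}$ and concluding $W = V(\dfont)$, whereas you argue by contradiction and run a generalized-eigenspace case analysis to secure $\phi^n$-stability of a proper subspace; both devices invoke algebraic closedness of $E$ in exactly the same way.
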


\begin{proof}
Note that $\phi^n$ gives rise to an endomorphism of $V(\dfont)$. Since $E$ is algebraically closed, $\phi^n$ has an eigenvalue $\lambda$, and the space $V(\dfont)^{\phi^n = \lambda}$ is stable under $\weil$, so that it contains an irreducible sub-representation $W$ of $\weil$.

The $k_K \otimes_{\Fp} E$-module $M = W \oplus \phi(W) \oplus \cdots \oplus \phi^{n-1}(W)$ is then a subspace of $V_K(\dfont)$, which is stable under $\weil$ and $\phi$, so that it is also stable under $\gal$ and $\phi$. The space $\efont_K \otimes_{k_K} M$ is then a sub-$(\phi,\Gamma)$-module of $\efont_K \otimes_{\efont_{\Qp}} \dfont$ that is stable under $\phi$ and $\gal$. By Galois descent (see for instance proposition 2.2.1 of \cite{BCfam}), $\efont_K \otimes_{k_K} M$ comes by extension of scalars from a sub-$(\phi,\Gamma)$-module of $\dfont$. If $\dfont$ is irreducible, then $M=\{0\}$ or $M=V_K(\dfont)$ and hence $V(\dfont)$ is irreducible.
\end{proof}

\begin{theo}\label{equiweil}
The two constructions $V \mapsto \dfont(V)$ and $\dfont \mapsto V(\dfont)$ defined above are inverse of each other and give rise to dimension preserving bijections between the set of irreducible $E$-linear representations of $\weil$ and the set of irreducible $(\phi,\Gamma)$-modules over $E\dpar{X}$.
\end{theo}

\begin{proof}
The fact that dimensions are preserved is clear from the constructions. The fact that the two constructions are inverse of each other is a tedious but straightforward exercise.
\end{proof}

\section{Topological representations of profinite groups}\label{profreps}

In this section, we first gather some results about topological $E$-vector spaces and duality, which generalize Pontryagin's theorems to certain $E$-vector spaces (see \S II.6 of \cite{SLtop}). After that, we look at continous representations of certain topological groups.

Recall that $E$ is a field that is taken with the discrete topology. A topological $E$-vector space $V$ is said to be linearly topologized if $V$ is separated (Hausdorff) and if $\{0\}$ has a basis of neighborhoods that are all vector spaces. For example, the discrete topology on $V$ is a linear topology. We denote by $\disc$ the category whose objects are the $E$-linear vector spaces with the discrete topology, with continuous linear maps as morphisms.

We say that an affine subspace $W$ of a linearly topologized $E$-vector space $V$ is linearly compact if every family $\{W_i\}_{i \in I}$ of closed affine subspaces of $W$ having the finite intersection property has a non-empty intersection. Linearly compact affine spaces generally enjoy the same properties as compact topological spaces (see (27) of \S II.6 of \cite{SLtop}). For example, a linearly compact subspace of $V$ is closed in $V$, its image under a continuous linear map is linearly compact, and if $V_1,\hdots,V_n$ are linearly compact spaces, then their product is linearly compact. If $V$ is linearly compact and if $W$ is a closed subspace of $V$, then $W$ is open in $V$ if and only if it is of finite codimension.

We say that an $E$-vector space is of profinite dimension if it is an inverse limit of finite dimensional discrete $E$-vector spaces. For example, $E\dcroc{X}$ with the $X$-adic topology is of profinite dimension. Such a space is then linearly compact and conversely, by (32) of \S II.6 of \cite{SLtop}, linearly compact spaces are profinite dimensional. We denote by $\comp$ the category whose objects are the linearly compact $E$-vector spaces, with continuous linear maps as morphisms.

If $V$ is a topological vector space, we denote by $V^\ast$ its continuous dual. This space is given a linear topology by choosing as a basis of neighborhoods of $\{0\}$ the set $\{E^\perp\}_E$ where $E$ runs through all linearly compact subspaces of $V$, and $E^\perp = \{ f \in V^*$ such that $f(v)=0$ for all $v \in E\}$.

\begin{theo}
\label{dualprodis}
The duality functor $V \mapsto V^*$ gives rise to equivalences of categories $\disc \to \comp$ and $\comp \to \disc$. 

Moreover, the natural map $V \to (V^*)^*$ is an isomorphism.
\end{theo}

\begin{proof}
See (29) in \S II.6 of \cite{SLtop}.
\end{proof}

We now turn to group representations. Let $G$ be a topological group and let $\discg$ and $\compg$ denote the categories of continuous $E$-linear representations of $G$ on either discrete or linearly compact spaces. If $V$ is a representation of $G$, then $V^*$ is a representation of $G$, with the usual action given by $(gf)(v) = f(g^{-1}v)$.

\begin{prop}\label{dualirred}
If $V \in \discg$ or $V \in \compg$ is topologically irreducible, then so is its dual $V^*$.
\end{prop}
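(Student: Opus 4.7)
The plan is to set up an inclusion-reversing bijection $W \mapsto W^\perp$ between closed $G$-stable subspaces of $V$ and closed $G$-stable subspaces of $V^*$, and then transport a hypothetical non-trivial closed $G$-stable subspace of $V^*$ back to $V$ in order to contradict irreducibility.

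For a closed subspace $W \subset V$, I set $W^\perp = \{f \in V^* : f|_W = 0\} \subset V^*$; this is closed, and if $W$ is $G$-stable then so is $W^\perp$, because $(gf)(w) = f(g^{-1}w) = 0$ for $g \in G$, $f \in W^\perp$, $w \in W$. The key technical step is the biduality $(W^\perp)^\perp = W$ for $W$ closed in $V$. To establish it I would apply the duality functor of Theorem \ref{dualprodis} to the short exact sequence $0 \to W \to V \to V/W \to 0$ in $\disc$ or $\comp$: since the duality is exact and $W^\perp$ is canonically identified with $(V/W)^*$, this yields a short exact sequence $0 \to W^\perp \to V^* \to W^* \to 0$. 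Applying the duality functor a second time and using the natural isomorphism $V \simeq (V^*)^*$ shows that $W$ is the kernel of the map $V \to (W^\perp)^*$, which is exactly $(W^\perp)^\perp$.

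With the bijection in hand, I assume for contradiction that $V^*$ has a closed $G$-stable subspace $\{0\} \subsetneq W \subsetneq V^*$. Then $W^\perp \subset V$ is closed and $G$-stable; it is proper because any nonzero $f \in W$ is nonzero on some $v \in V$, so $v \notin W^\perp$; and it is nonzero because otherwise biduality would give $W = (W^\perp)^\perp = \{0\}^\perp = V^*$. This contradicts the topological irreducibility of $V$, and the same argument works in either direction $\disc \leftrightarrow \comp$.

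The only step requiring real care is the biduality $(W^\perp)^\perp = W$. Once one knows that $W$ and $V/W$ remain in the appropriate category (in the $\comp$ case this uses that closed subspaces of linearly compact spaces are linearly compact and that the Hausdorff quotient is again linearly compact, both listed among the standard properties of \S II.6 of \cite{SLtop}) and that the duality functor of Theorem \ref{dualprodis} is exact, everything reduces to the biduality $U \simeq (U^*)^*$ supplied by that theorem.
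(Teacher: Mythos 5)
Your argument is correct and is essentially the paper's: both take the annihilator $W^\perp$ of a closed $G$-stable subspace $W$ of $V^*$, identify it with $(V^*/W)^*$ via Theorem~\ref{dualprodis}, observe it is closed and $G$-stable in $V$, and then use irreducibility of $V$ together with biduality to force $W$ to be $\{0\}$ or $V^*$. You spell out the biduality $(W^\perp)^\perp = W$ by dualizing the short exact sequence twice, which is a slightly more explicit version of the same step the paper handles by citing Theorem~\ref{dualprodis}.
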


\begin{proof}
If $W$ is a closed subspace of $V^*$ stable under $G$, then let $W^\perp = \{ v \in V$ such that $f(v)=0$ for all $f \in W\}$. The natural map $W^\perp \to (V^*/W)^*$ is an isomorphism by theorem \ref{dualprodis}. Moreover, $W^\perp$ is a closed subspace of $V$, that is also stable under $G$, so that either $W^\perp=\{0\}$ and $W=V^*$ or $W^\perp=V$ and $W=\{0\}$.
\end{proof}

Assume now that $G$ is a topologically finitely generated profinite group (in this article we only need the case $G=\Zp$). Denote by $V(G)$ the sub-$E$-vector space of $V$ generated by the elements $(g-1)v$ where $g \in G$ and $v \in V$.

\begin{prop}\label{cljm}
If $V \in \compg$, then $V(G)$ is a closed subspace of $V$.
\end{prop}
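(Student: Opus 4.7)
The plan is to exhibit $V(G)$ as the image of a continuous linear map from a linearly compact space, and then invoke the fact (recalled just before the statement) that the continuous linear image of a linearly compact space is linearly compact, hence closed.

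More precisely, let $g_1,\dots,g_n$ be a finite set of topological generators of $G$, and consider the map
\[ \phi : V^n \longrightarrow V, \qquad (v_1,\dots,v_n) \longmapsto \sum_{i=1}^n (g_i-1)v_i. \]
This map is continuous and $E$-linear. Since each factor $V$ is linearly compact, so is the product $V^n$, and therefore $W := \phi(V^n)$ is a linearly compact, hence closed, subspace of $V$. Clearly $W \subseteq V(G)$, so it remains to show the reverse inclusion.

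For this, I would first argue on the abstract subgroup $H \subseteq G$ generated by $g_1,\dots,g_n$. Using the identities $(ab-1)v = a(b-1)v + (a-1)v$ and $(a^{-1}-1)v = -a^{-1}(a-1)v$, an easy induction on word length shows that $(h-1)v \in W$ for every $h \in H$ and every $v \in V$ (here I use that $W$ is stable under $G$, which follows from $g \cdot (g_i-1)v = (gg_i-1)v - (g-1)v$ and the identity just mentioned, so $g \cdot W \subseteq W + W = W$). Then for an arbitrary $g \in G$, writing $g = \lim_\alpha h_\alpha$ with $h_\alpha \in H$ and using the continuity of the action of $G$ on $V$, we have $(h_\alpha-1)v \to (g-1)v$; since $W$ is closed this gives $(g-1)v \in W$. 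Hence $V(G) \subseteq W$, and so $V(G) = W$ is closed.

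The only mildly delicate step is the passage from $H$ to its closure $G$: it relies on the fact that the action map $G \times V \to V$ is jointly continuous (which is the meaning of ``continuous representation'' on a linearly topologized space) and on the closedness of $W$, which is precisely what makes the whole argument work. Everything else is formal manipulation with the category $\compg$ and the identities defining $V(G)$.
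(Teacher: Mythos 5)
Your overall strategy coincides with the paper's: realize $W := \sum_{i=1}^n (g_i-1)V$ as the continuous linear image of the linearly compact space $V^n$, conclude it is linearly compact and hence closed, and then get $V(G) \subseteq W$ by first handling the dense subgroup $H$ generated by the $g_i$ and then using continuity together with the closedness of $W$. The paper simply asserts the equality $V(H) = \sum_i (g_i-1)V$ without spelling it out, and then runs the density argument exactly as you do.

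There is, however, a genuine gap in the step $V(H) \subseteq W$. You reduce it to the claim that $W$ is stable under $G$, and you justify that claim by the identity $g\cdot(g_i-1)v = (gg_i-1)v - (g-1)v$ "together with the identity just mentioned." But $(gg_i-1)v$ and $(g-1)v$ are general elements of $V(G)$; showing they lie in $W$ is precisely what you are trying to prove, so the argument is circular. The left-sided identity $(ab-1)v = a(b-1)v + (a-1)v$ forces you to push a group element through $W$ before you know $W$ is stable. (Incidentally, in the case $G=\Zp$ that the paper actually uses, the group is abelian, $g_i$ commutes with $g_j-1$, and stability of $W$ is immediate; but the proposition is stated for general topologically finitely generated profinite $G$, and there the circularity is real.)

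The fix is to use the right-sided cocycle identity $(ab-1)v = (a-1)(bv) + (b-1)v$, coming from $(ab-1) = (a-1)b + (b-1)$ in the group ring. Then the induction on word length needs no stability hypothesis at all: if $h = g_i h'$ with $h'$ shorter, then $(h-1)v = (g_i-1)(h'v) + (h'-1)v$, and the first summand lies in $(g_i-1)V \subseteq W$ by definition while the second lies in $W$ by induction; similarly $(g_i^{-1}-1)w = -(g_i-1)(g_i^{-1}w) \in (g_i-1)V$. Equivalently, the augmentation ideal $I \subset E[H]$ is generated \emph{as a right ideal} by the $g_i-1$, so $V(H) = IV = \sum_i (g_i-1)E[H]V = \sum_i (g_i-1)V = W$. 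With that repair, your closedness-plus-density argument completes the proof as in the paper.
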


\begin{proof}
Let $g_1,\hdots,g_n$ be elements generating a dense subgroup $G'$ of $G$. The subspace $(g_i-1)V$ is the image of a linearly compact subspace by a continuous linear map and is hence linearly compact. This implies that $V(G') = \sum_{i=1}^n (g_i-1)V$ is linearly compact and therefore closed in $V$. 

If $v \in V$, then the image of $G'$ under the map $g \mapsto (g-1)v$ is contained in $V(G')$ and, since $G'$ is dense in $G$ and $V(G')$ is closed in $V$, the image of $G$ is also contained in $V(G')$ so that $V(G)=V(G')$ and $V(G)$ is closed in $V$.
\end{proof}

Note that the same is trivially true if $V \in \discg$. We set $V_G = V/V(G)$.

\begin{prop}\label{coinv}
If $V \in \discg$ or $V \in \compg$, then $(V^G)^* = (V^*)_G$.
\end{prop}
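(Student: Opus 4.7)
The plan is to dualize the inclusion of the closed subspace $V^G \hookrightarrow V$. I first verify that $V^G = \bigcap_{g \in G}\ker(g-1)$ is indeed closed in $V$, as an intersection of kernels of continuous endomorphisms, so that $V/V^G$ inherits a linear topology and lies in the same category $\discg$ or $\compg$ as $V$. Applying the contravariant equivalence of Theorem \ref{dualprodis} to the short exact sequence
\[ 0 \to V^G \to V \to V/V^G \to 0 \]
yields a short exact sequence $0 \to (V/V^G)^* \to V^* \to (V^G)^* \to 0$, in which the first term is identified with the annihilator $(V^G)^\perp = \{f \in V^* : f|_{V^G} = 0\} \subseteq V^*$.

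The heart of the argument is then the identification $(V^G)^\perp = V^*(G)$. One direction is a direct computation: for $v \in V^G$ and $f \in V^*$, the equality $((g-1)f)(v) = f(g^{-1}v) - f(v) = 0$ shows that the generators of $V^*(G)$ annihilate $V^G$, and since $(V^G)^\perp$ is closed, all of $V^*(G)$ lies in $(V^G)^\perp$.

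For the reverse inclusion I appeal to the bipolar relation $W = (W^\perp)^\perp$ valid for any closed $E$-subspace $W \subseteq V^*$, which is a formal consequence of the reflexivity $V \simeq (V^*)^*$ of Theorem \ref{dualprodis}. Applied to $W = V^*(G)$ — which is closed, trivially in the discrete case and by Proposition \ref{cljm} in the linearly compact case — the relation reduces the task to verifying $V^*(G)^\perp = V^G$. But $v \in V^*(G)^\perp$ means $f((g^{-1}-1)v) = 0$ for every $f \in V^*$ and every $g \in G$; the separation of points in $V = (V^*)^*$ then forces $(g^{-1}-1)v = 0$ for all $g$, that is, $v \in V^G$. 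Quotienting the short exact sequence above gives the desired isomorphism $(V^G)^* \cong V^*/V^*(G) = (V^*)_G$.

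The step I expect to be most delicate is ensuring that $V^*(G)$ is genuinely closed, so that the bipolar relation can be invoked directly rather than only for its closure; this is exactly the content of Proposition \ref{cljm}, which relies on the hypothesis that $G$ is topologically finitely generated. Every other step is a formal manipulation of the duality formalism of Theorem \ref{dualprodis}.
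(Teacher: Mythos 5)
Your proof is correct. It is worth comparing the organization to the paper's, which is slicker. The paper first proves the \emph{transposed} identity $(V^*)^G = (V_G)^*$: a functional $f \in V^*$ is $G$-invariant if and only if $f$ vanishes on $V(G)$, which is a one-line observation giving $(V^*)^G = V(G)^\perp \cong (V/V(G))^* = (V_G)^*$ with no bipolar argument needed (the ``hard'' inclusion is automatic because one is characterizing a subspace of $V^*$ by a vanishing condition, not computing an annihilator inside $V^*$). It then substitutes $V^*$ for $V$ and applies the reflexivity $V \cong (V^*)^*$ twice to land on $(V^G)^* = (V^*)_G$. Your route proves the stated identity directly, so the key equality becomes $(V^G)^\perp = V^*(G)$ inside $V^*$, and the inclusion $(V^G)^\perp \subseteq V^*(G)$ genuinely requires the bipolar relation. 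Both proofs rest on the same ingredients — Proposition \ref{cljm} for closedness, and the reflexivity of Theorem \ref{dualprodis} — so the difference is one of bookkeeping rather than substance; the paper's ordering just trades your bipolar step for a substitute-and-dualize step. One small remark: the clause ``since $(V^G)^\perp$ is closed, all of $V^*(G)$ lies in $(V^G)^\perp$'' is unnecessary — $V^*(G)$ is by definition the (not-yet-closed) span of the elements $(g-1)f$, and $(V^G)^\perp$ is a vector space, so containment of generators already gives containment of the span; no closure is invoked at that point.
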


\begin{proof}
If $f\in V^*$, then $f(gv)=f(v)$ for all $g \in G$ and $v \in V$ if and only if $f$ is zero on $V(G)$. This implies that $(V^*)^G = (V_G)^*$. Replacing $V$ by $V^*$ in this formula and dualizing gives us the proposition.
\end{proof}

Let $E\dcroc{G} = \projlim_N E[G/N]$ denote the completed group algebra of $G$, where $N$ runs through the set of open normal subgroups of $G$.

\begin{prop}
\label{egmod}
If $V \in \compg$ or $V \in \discg$, then $V$ is an $E\dcroc{G}$-module.
\end{prop}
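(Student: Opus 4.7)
The plan is to handle the discrete case directly from continuity, and then reduce the linearly compact case to the discrete one either via duality or by passing to a cofinal system of finite-dimensional $G$-stable quotients.

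For $V \in \discg$, I would use the standard fact that a continuous $G$-action on a discrete space has open stabilizers: for each $v \in V$ there is an open normal subgroup $N_v \triangleleft G$ such that $N_v$ fixes $v$, so the cyclic $E[G]$-submodule $E[G]\cdot v$ is an $E[G/N_v]$-module. Given $\lambda = (\lambda_N)_N \in E\dcroc{G}$, I would set $\lambda\cdot v = \lambda_{N_v}\cdot v$, where $\lambda_{N_v} \in E[G/N_v]$ is the component of $\lambda$. Independence of the choice of open normal $N \subseteq N_v$ is immediate from the compatibility $\lambda_{N_v} = \mathrm{pr}_{N,N_v}(\lambda_N)$ together with the fact that $E[G] \to \mathrm{End}_E(E[G]\cdot v)$ factors through $E[G/N_v]$. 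Bilinearity in $(\lambda,v)$ and the associativity $(\lambda\mu)\cdot v = \lambda\cdot(\mu\cdot v)$ are checked on a single open normal subgroup small enough to work simultaneously for $v$, $\mu\cdot v$, and (since these are finite $E$-linear combinations of single $G$-translates) for all terms involved, which reduces the claim to the obvious statement inside the finite-dimensional algebra $E[G/N]$.

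For $V \in \compg$, the cleanest route I would take is to realize $V$ as a projective limit of finite-dimensional discrete representations. First I would show that the $G$-stable open subspaces of finite codimension form a basis of neighborhoods of $0$: given any open subspace $U$ of finite codimension, the finite-dimensional discrete quotient $V/U$ carries a continuous $G$-action, and since $\mathrm{GL}(V/U)$ is finite, the kernel of $G \to \mathrm{GL}(V/U)$ is an open subgroup of finite index; the intersection $U' = \bigcap_{g\in G/\ker} gU$ is then a finite intersection of open subspaces, hence open, and is $G$-stable of finite codimension. Linear compactness gives $V = \projlim_{U} V/U$ along this cofinal system of $G$-stable quotients. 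Each $V/U$ is a finite-dimensional discrete $G$-representation and hence an $E\dcroc{G}$-module by the first case, and the transition maps are $E\dcroc{G}$-equivariant, so the $E\dcroc{G}$-module structure passes to the inverse limit, which is $V$.

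Alternatively, since theorem \ref{dualprodis} identifies $\compg$ with $\discg$ via $V \mapsto V^*$, one can simply transport the $E\dcroc{G}$-module structure: $V^* \in \discg$ is an $E\dcroc{G}$-module by the first case, and dualizing the action map $E\dcroc{G}\otimes_E V^* \to V^*$ (using $V^{**}=V$ from theorem \ref{dualprodis}) yields a continuous $E\dcroc{G}$-action on $V$. The main obstacle is really just the bookkeeping for the compact case, namely verifying that there are enough $G$-stable open subspaces of finite codimension; everything else is formal manipulation of projective limits and of the topological module structure afforded by continuity of the action on a discrete space.
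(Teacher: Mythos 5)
Your treatment of the discrete case is fine, and your overall strategy for $V\in\compg$ — writing $V$ as a projective limit of finite-dimensional discrete $G$-representations and passing the $E\dcroc{G}$-structure to the limit — is exactly the paper's strategy. However, the specific step where you produce a cofinal system of $G$-stable open subspaces is circular as written. You start from an arbitrary open subspace $U$ of finite codimension and then invoke "the continuous $G$-action on $V/U$" and "the kernel of $G\to\GL(V/U)$"; but $G$ does not act on $V/U$ at all unless $U$ is already $G$-stable, which is precisely what you are trying to arrange. (A secondary issue: $\GL(V/U)$ is not finite when $E$ is infinite — e.g.\ algebraically closed of characteristic $p$, a case the paper needs — so "since $\GL(V/U)$ is finite" is wrong even once the map exists; the relevant finiteness would instead come from the image of a compact group in a discrete group being finite.) What you actually need at this point is a direct compactness argument: the paper chooses, for each $g\in G$, neighborhoods $H_g\ni g$ in $G$ and open subspaces $W_g$ of $V$ with $H_g\cdot W_g\subseteq U$, extracts a finite subcover $G=H_{g_1}\cup\cdots\cup H_{g_n}$, sets $X=W_{g_1}\cap\cdots\cap W_{g_n}$, and takes the $E$-span of $G\cdot X$, which is open and $G$-stable and contained in $U$. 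Without some such argument your first route does not close.

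Your alternative route via duality, on the other hand, is correct and is a genuinely different (and quite clean) way to finish: $V^*\in\discg$ is an $E\dcroc{G}$-module by the discrete case, and theorem \ref{dualprodis} transports this back to $V=V^{**}$. The only bookkeeping point is that dualizing the action replaces $g$ by $g^{-1}$, so one composes with the standard anti-automorphism of $E\dcroc{G}$ to recover a left module structure on $V$. This avoids the cofinality issue entirely, at the cost of relying on the full strength of the duality theorem rather than just elementary continuity and compactness.
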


\begin{proof}
If $V \in \discg$, then this is immediate, so assume that $V \in \compg$. The space $V$ is a projective limit of finite dimensional $E$-vector spaces. We first show that if $V \in \compg$, then $V$ is a projective limit of finite dimensional $E$-linear representations of $G$. It is enough to prove that if $W$ is an open subspace of $V$, then it contains an open subspace stable under $G$. By continuity, for each $g \in G$, there exists an open neighborhood $H_g$ of $g$ in $G$ and an open subspace $W_g$ of $V$ such that $H_g \cdot W_g \subset W$. By compacity of $G$, there exists $g_1,\hdots,g_n \in G$ such that $G = H_{g_1} \cup \cdots \cup H_{g_n}$ and if we set $X = W_{g_1} \cap \cdots \cap W_{g_n}$, then $X$ is an open subspace of $W$ and $G \cdot X \subset W$. The vector space generated by $G \cdot X$ is then open in $W$ and stable under $G$.

Since $V$ is a projective limit of finite dimensional $E$-linear representations of $G$ by the above, and since each of them is an $E\dcroc{G}$-module, then so is $V$.
\end{proof}

We now assume that $G=\Zp$ so that $E\dcroc{G}=E\dcroc{X}$. The following result is a variant of Nakayama's lemma.

\begin{theo}\label{naklem}
If $V \in \comp$ is a topological $E\dcroc{X}$-module, then $V$ is finitely generated over $E\dcroc{X}$ if and only if $V/XV$ is a finite dimensional $E$-vector space.
\end{theo}

\begin{proof}
The fact that if $V$ is finitely generated over $E\dcroc{X}$, then $V/XV$ is a finite dimensional $E$-vector space is immediate, so let us prove the converse.

Let $v_1,\hdots,v_n$ be elements of $V$ that generate $V/XV$ over $E$, and let $W$ be the $E\dcroc{X}$-module generated by $v_1,\hdots,v_n$. The $E$-vector space $W$ is linearly compact, and therefore so is $V/W$. In addition, $(V/W)/X=\{0\}$. It is therefore enough to show that if $V \in \comp$ is a topological $E\dcroc{X}$-module such that $V/XV=\{0\}$, then $V=\{0\}$. 

Let $U$ be an open subspace of $V$. By continuity, there exists an open subspace $W$ of $U$ and $k_0 \geq 1$ such that $X^k W \subset U$ if $k \geq k_0$. Since $W$ is open, it is of finite codimension in $V$ and there exists $v_1,\hdots,v_n \in V$ such that $V = W+Ev_1 + \cdots + Ev_n$. For each $i$, there exists $k_i$ such that $X^k v_i \in U$ if $k \geq k_i$. If $k \geq \max(k_0,\hdots,k_n)$, then $X^k V \subset U$. But $X^k V = V$ so that the only open subspace of $V$ is $V$ itself and hence $V=\{0\}$.
\end{proof}

\section{Colmez's functor}\label{colfun}

In this section, we recall Colmez's construction of representations of $\B$ starting from Galois representations (see \S III of \cite{PCmir}).

If $\mfont$ is a $(\psi,\Gamma)$-module, then we denote by $\projlim_\psi \mfont$ the set of sequences $\{m_n\}_{n \in \ZZ}$ where $m_n \in \mfont$ and $\psi(m_{n+1}) = m_n$ for all $n \in \ZZ$. Let $\chi : \Qp^\times \to E^\times$ be a smooth character. We endow $\projlim_\psi \mfont$ with an action of $\B$ in the following way
\begin{align*}
\left( \smat{ z &  0 \\ 0 & z} \cdot y \right)_i & = \chi(z)^{-1} y_i; \\
\left( \smat{ 1 &  0 \\ 0 & p} \cdot y \right)_i & = y_{i-1} = \psi(y_i); \\
\left( \smat{ 1 &  0 \\ 0 & a} \cdot y \right)_i & = [a^{-1}](y_i); \\
\left( \smat{ 1 &  z \\ 0 & 1} \cdot y \right)_i & = \psi^j((1+X)^{p^{i+j} z} y_{i+j}),\text{ for $i+j \geq -{\rm val}(z)$.}
\end{align*}
It is straightforward to check that these formulas give rise to an action of $\B$, and make $\projlim_\psi \mfont$ into a profinite dimensional topological representation, $\mfont$ itself being separated and complete for the $X$-adic topology (warning: the normalization for the central character is the one chosen in \S 1.2 of \cite{LBgal} and it differs from the one in \S 2.2 of \cite{LBmod}). Note that if $\mfont_1$ and $\mfont_2$ are two $(\psi,\Gamma)$-modules, and there is a map $\mfont_1 \to \mfont_2$, then there is a map $\projlim_\psi \mfont_1 \to \projlim_\psi \mfont_2$. 

\begin{prop}\label{colsubmir}
If $\Sigma$ is a closed subspace of $\projlim_\psi \mfont$ stable under $\B$, then there exists a surjective sub-$(\psi,\Gamma)$-module $\nfont$ of $\mfont$ such that $\Sigma = \projlim_\psi \nfont$.
\end{prop}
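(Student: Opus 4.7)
The plan is to reverse-engineer $\nfont$ from $\Sigma$ in the only way possible, namely as the projection of $\Sigma$ to a single coordinate, and then rely on the $\B$-action to upgrade this set to a surjective sub-$(\psi,\Gamma)$-module whose projective limit is exactly $\Sigma$. Concretely, set
\[ \nfont = \{ y_0 : y = (y_i)_{i \in \ZZ} \in \Sigma \} \subset \mfont. \]
Because $\Sigma$ is stable under the shift $g = \smat{1 & 0 \\ 0 & p}$, which acts by $(g y)_i = y_{i-1} = \psi(y_i)$, the same set $\nfont$ is obtained by projecting to any other coordinate, and moreover $\psi(\nfont) \subseteq \nfont$. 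Surjectivity of $\psi$ on $\nfont$ is built in: given $n \in \nfont$, pick $y \in \Sigma$ with $y_0 = n$; then $y_1 \in \nfont$ and $\psi(y_1) = n$. Stability under $\Gamma$ is read off from the action of $\smat{1 & 0 \\ 0 & a}$, and stability under multiplication by $(1+X)^z$ for $z \in \Zp$ is read off from the $0$-th coordinate of the action of $\smat{1 & z \\ 0 & 1}$.

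The next step is to make $\nfont$ into a genuine $E\dcroc{X}$-submodule of $\mfont$, and here closedness is essential. Since $\mfont$ is a finitely generated $E\dcroc{X}$-module complete for the $X$-adic topology, it is linearly compact, so $\projlim_\psi \mfont$ is linearly compact as a closed subspace of the product $\prod_{i \in \ZZ} \mfont$. The continuous image $\nfont$ of the closed (hence linearly compact) subspace $\Sigma$ under the $0$-th projection is therefore closed in $\mfont$. Combined with stability under multiplication by all $(1+X)^z$ for $z \in \Zp$, and with the fact that the $E$-algebra these generate contains $E[X]$ which is $X$-adically dense in $E\dcroc{X}$, continuity of multiplication forces $\nfont$ to be stable under multiplication by every element of $E\dcroc{X}$. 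Noetherianity of $E\dcroc{X}$ then gives finite generation, so $\nfont$ is indeed a surjective sub-$(\psi,\Gamma)$-module of $\mfont$.

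It remains to show $\Sigma = \projlim_\psi \nfont$. The inclusion $\Sigma \subseteq \projlim_\psi \nfont$ is immediate from the definition of $\nfont$ and the $\psi$-compatibility of the sequences in $\Sigma$. For the reverse, fix $(y_i) \in \projlim_\psi \nfont$. For each integer $i_0$, pick $\sigma^{(i_0)} \in \Sigma$ with $\sigma^{(i_0)}_0 = y_{i_0}$, and set $\tau^{(i_0)} = g^{i_0} \sigma^{(i_0)} \in \Sigma$. Then for $j \leq i_0$ one computes
\[ \tau^{(i_0)}_j = \sigma^{(i_0)}_{j-i_0} = \psi^{i_0-j}(\sigma^{(i_0)}_0) = \psi^{i_0-j}(y_{i_0}) = y_j, \]
using the compatibility of $(y_i)$ under $\psi$. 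Thus $\tau^{(i_0)} \to (y_j)$ coordinate-wise as $i_0 \to +\infty$, which is exactly convergence in the product (and hence projective-limit) topology, and closedness of $\Sigma$ forces $(y_j) \in \Sigma$.

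The main obstacle I anticipate is the passage from "stable under $(1+X)^z$-multiplication for $z \in \Zp$" to "stable under multiplication by every element of $E\dcroc{X}$": the density argument only goes through because $\nfont$ inherits enough topology from the linearly compact ambient space $\mfont$ to make multiplication continuous, and checking this cleanly (and knowing to invoke linear compactness of $\projlim_\psi \mfont$ in order to close up $\nfont$) is the technical heart of the matter. Once this is in place, the approximation argument via the shift $g$ in the last step is essentially formal.
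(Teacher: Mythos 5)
Your proof is correct and follows essentially the same route as the paper (and as Colmez's Lemma III.3.6, which the paper cites): you define $\nfont$ as the image of $\Sigma$ under a coordinate projection, use the $\B$-action to check $\psi$-surjectivity, $\Gamma$-stability, and stability under $(1+X)^z$, use linear compactness to deduce that $\nfont$ is closed and hence an honest $E\dcroc{X}$-submodule, and then use the shift plus closedness to recover $\Sigma = \projlim_\psi \nfont$. The only thing worth noting is that the equality of the $\nfont_k$ for all $k$ (and the surjectivity of $\psi$ on $\nfont$) uses stability under \emph{both} $\smat{1 & 0 \\ 0 & p}$ and its inverse, which you do have since $\Sigma$ is stable under all of $\B$; your phrasing ``stable under the shift'' slightly understates this, but the substance is right.
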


\begin{proof}
This is lemma III.3.6 of \cite{PCmir}. We recall the idea of the proof: if $\nfont_k$ is the set of $m \in \mfont$ such that there exists $x \in \Sigma$ with $m = x_k$, then Colmez shows that $\nfont_k$ is a $(\psi,\Gamma)$-module that is independent of $k$ and that we can take $\nfont = \nfont_k$.
\end{proof}

\begin{theo}\label{subpsirred}
If $\Sigma$ is an infinite dimensional topologically irreducible subrepresentation of $\projlim_\psi \mfont$ for some $(\psi,\Gamma)$-module $\mfont$, then there exists a $(\psi,\Gamma)$-module $\nfont$ that is irreducible and free over $E\dcroc{X}$, such that $\Sigma = \projlim_\psi \nfont$.
\end{theo}

\begin{proof}
Let $\mtor$ denote the torsion submodule of $\mfont$. We then have an exact sequence $\projlim_\psi \mtor \to \projlim_\psi \mfont \to \projlim_\psi \mfont / \mtor$. If the image of $\Sigma$ in $\projlim_\psi \mfont/\mtor$ is non-zero, then we have reduced to the case where $\mfont$ is torsion-free. 

Otherwise, $\Sigma$ injects in $\projlim_\psi \mtor$ and $\mtor$ is a finite dimensional $E$-vector space. Proposition \ref{colsubmir} shows that $\Sigma = \projlim_\psi \nfont$ where $\nfont$ is a finite dimensional $E$-vector space. Since $\psi : \nfont \to \nfont$ is surjective, it is injective, and then $\projlim_\psi \nfont = \nfont$ so that $\Sigma$ itself is a finite dimensional $E$-vector space.

We can therefore assume that $\mfont$ is torsion free. Let $\mfont$ be such that $\Sigma$ injects in $\projlim_\psi \mfont$, with $\mfont$ torsion free, surjective and of minimal rank. If $\nfont$ is a sub-$(\psi,\Gamma)$-module of $\mfont$, then the same argument as above shows that $\Sigma$ injects in either $\projlim_\psi \nfont$ or $\projlim_\psi \mfont/\nfont$. This implies that the rank of $\nfont$ is equal to the rank of $\mfont$, so there exists $n \geq 0$ such that $X^n \mfont \subset \nfont$. Repeatedly applying $\psi$ shows that $X \mfont \subset \nfont$. Since $\mfont/X$ is a finite dimensional $E$-vector space, there is therefore a smallest $\mfont$ such that $\Sigma$ injects in $\projlim_\psi \mfont$, and this $\mfont$ is then irreducible.
\end{proof}

If $V$ is an irreducible representation of either $\gal$ (when $E$ is a finite field) or $\weil$ (when $E$ is an algebraically closed field), then by the results of \S \ref{galrep}, we can attach to it a $(\phi,\Gamma)$-module $\dfont(V)$ and then by definition \ref{mofd} an irreducible $(\psi,\Gamma)$-module $\mfont(V)=\mfont(\dfont(V))$. Let $\chi$ be a smooth character of $\Qp^\times$. The space $\projlim_\psi \mfont(V)$ is of profinite dimension and gives rise to a continuous representation of $\B$, which is topologically irreducible by proposition \ref{colsubmir}. Its dual $\Omega_\chi(V) = (\projlim_\psi \mfont(V))^*$ is therefore a smooth irreducible representation of $\B$, with central character $\chi$. We finish by recalling a result of \cite{LBgal} to the effect that $\Omega_\chi(V)$ determines $\chi$ and $V$.

\begin{prop}\label{schurbor}
If $V_1$ and $V_2$ are irreducible and $\Omega_{\chi_1}(V_1)$ is isomorphic to $\Omega_{\chi_2}(V_2)$ as representations of $\B$, then $\chi_1 = \chi_2$ and $V_1=V_2$.
\end{prop}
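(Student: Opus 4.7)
The plan is to reconstruct both $\chi$ and $V$ (up to isomorphism) from the $\B$-representation $\Omega_\chi(V)$. First, the central character of $\Omega_{\chi_i}(V_i)$ is $\chi_i$ by construction, so a $\B$-equivariant isomorphism immediately forces $\chi_1 = \chi_2$; write $\chi$ for this common character. Next, since $\Omega_\chi(V_i)$ is smooth and hence discrete as a $\B$-representation, theorem \ref{dualprodis} converts the hypothesis into a $\B$-equivariant isomorphism of profinite dimensional representations $f \colon \projlim_\psi \mfont(V_1) \xrightarrow{\sim} \projlim_\psi \mfont(V_2)$.

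The heart of the argument is to extract from $f$ a $(\psi,\Gamma)$-module isomorphism $\mfont(V_1) \cong \mfont(V_2)$. To this end, observe that on $\projlim_\psi \mfont$ the submonoid $\B^+ \subset \B$ consisting of matrices $\smat{a & b \\ 0 & p^n}$ with $a \in \Zp^\times$, $b \in \Zp$, $n \geq 0$ acts so that the projection $\pi \colon \projlim_\psi \mfont \to \mfont$, $y \mapsto y_0$, is $\B^+$-equivariant, with $\smat{1 & 0 \\ 0 & p}$ acting on $\mfont$ as $\psi$, $\smat{1 & 0 \\ 0 & a}$ as $[a^{-1}]$, and $\smat{1 & z \\ 0 & 1}$ as multiplication by $(1+X)^z$. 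This $\B^+$-action on $\mfont$ already encodes the entire $(\psi,\Gamma)$-module structure. Combining $f$ with the $\B^+$-equivariance of $\pi$, proposition \ref{colsubmir}, and the irreducibility of both $\mfont(V_i)$ (so that the only $\B$-stable closed subspaces of the two sides are $\{0\}$ and the whole space), one extracts an isomorphism $\mfont(V_1) \cong \mfont(V_2)$ of $(\psi,\Gamma)$-modules.

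Once this is in hand, both $\mfont(V_i) = \dfont(V_i)^\natural$ are free over $E\dcroc{X}$, surjective, and non-degenerate (the dimension-one case being a direct check), so theorem \ref{psitophi} puts a unique compatible $(\phi,\Gamma)$-module structure on $\dfont(V_i) = E\dpar{X} \otimes_{E\dcroc{X}} \mfont(V_i)$. The preceding isomorphism therefore lifts to an isomorphism $\dfont(V_1) \cong \dfont(V_2)$ of $(\phi,\Gamma)$-modules, and theorem \ref{fontequiv} (when $E$ is a finite field) or theorem \ref{equiweil} (when $E$ is algebraically closed) yields $V_1 \cong V_2$.

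The main obstacle is the passage from $f$ to the $(\psi,\Gamma)$-isomorphism of the $\mfont(V_i)$. The projection $\pi$ is \emph{not} $\B$-equivariant: the element $\smat{1 & 0 \\ 0 & p}$ is invertible in $\B$, whereas $\psi$ is in general not invertible on $\mfont$, so $f$ cannot simply be pushed through $\pi$. Overcoming this requires a Schur-type rigidity argument exploiting the topological irreducibility of the inverse limits as $\B$-representations, along the lines of \S III of \cite{PCmir} and of \cite{LBgal}.
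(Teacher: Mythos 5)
Your outline has the right skeleton (central character forces $\chi_1 = \chi_2$; dualize via theorem \ref{dualprodis}; produce a $(\psi,\Gamma)$-module map between the $\mfont(V_i)$; lift to the $\dfont(V_i)$; invoke the equivalence of categories), and you correctly identify the genuine difficulty: the projection $\pr_0 \colon \projlim_\psi \mfont \to \mfont$ is only $\B^+$-equivariant, so $f$ cannot simply be pushed through it. But you then stop exactly there. The sentence ``one extracts an isomorphism $\mfont(V_1) \cong \mfont(V_2)$'' and the closing remark that ``overcoming this requires a Schur-type rigidity argument\ldots along the lines of \S III of \cite{PCmir} and of \cite{LBgal}'' amount to asserting, rather than proving, the crucial step. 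As written, the heart of the proposition is an unfilled gap.

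Here is the content you are missing, which is the actual work in the paper's proof. For $n \geq 0$, let $K_n \subset \projlim_\psi \mfont(V_1)$ be the set of $m$ with $\pr_k(m) = 0$ for all $k \leq n$. Each $K_n$ is a closed sub-$E\dcroc{X}$-module stable under $\psi$ and $\Gamma$, and $\psi(K_n) = K_{n+1}$. Consequently $\pr_0 \circ f(K_n)$ is a sub-$(\psi,\Gamma)$-module of $\mfont(V_2)$, so by irreducibility it is either $\{0\}$ or all of $\mfont(V_2)$. Continuity of $f$ gives $\pr_0 \circ f(K_n) = \{0\}$ for $n \gg 0$, and the relation $\psi(\pr_0 \circ f(K_n)) = \pr_0 \circ f(K_{n+1})$ then forces $\pr_0 \circ f(K_n) = \{0\}$ for \emph{every} $n \geq 0$. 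In particular $\pr_0 \circ f(m)$ depends only on $m_0$, and $m_0 \mapsto \pr_0 \circ f(m)$ is a well-defined non-zero morphism of $(\psi,\Gamma)$-modules $\mfont(V_1) \to \mfont(V_2)$. This is precisely the ``Schur-type rigidity'' you gesture at, and it is not a triviality: it is the step that circumvents the non-invertibility of $\psi$. A secondary, smaller issue: to pass from the $(\psi,\Gamma)$-module map to an isomorphism $\dfont(V_1) \simeq \dfont(V_2)$, the paper invokes proposition II.3.4 of \cite{PCmir} (extension of $(\psi,\Gamma)$-morphisms to $(\phi,\Gamma)$-morphisms), rather than theorem \ref{psitophi}; if you want to argue via theorem \ref{psitophi} you must also note that the $\phi$ it constructs is the unique inverse of $\psi$ on the distinguished complement, so that any $(\psi,\Gamma)$-map automatically commutes with it.
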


\begin{proof}
This is proposition 1.2.3 of \cite{LBgal} in the case that $E$ is a finite field, and the proof is similar if $E$ is algebraically closed. We recall the main ideas: since $\chi$ is the central character of  $\Omega_{\chi}(V)$, it is immediate that $\chi_1 = \chi_2$ so we need to show that if there is an equivariant map $f : \projlim_\psi \mfont(V_1) \to \projlim_\psi \mfont(V_2)$, then $V_1 = V_2$. Let $\pr_k : \projlim_\psi \mfont \to \mfont$ denote the map $\{m_n\}_{n \in \ZZ} \mapsto m_k$. If $n \geq 0$, let $K_n$ be the set of elements $m$ of $\projlim_\psi \mfont(V_1)$ such that $\pr_k(m) = 0$ for $k \leq n$. The module $K_n$ is a closed sub-$E\dcroc{X}$-module of $\projlim_\psi \mfont(V_1)$ that is stable under $\psi$ and $\Gamma$, and $\psi(K_n) = K_{n+1}$. This implies that $\pr_0 \circ f(K_n)$ is a sub-$(\psi,\Gamma)$-module of $\mfont(V_2)$. Since $\mfont(V_2)$ is irreducible, we have either $\pr_0 \circ f(K_n)=\{0\}$ or $\pr_0 \circ f(K_n)=\mfont(V_2)$. In addition, $\psi(\pr_0 \circ f(K_n)) = \pr_0 \circ f(K_{n+1})$ and $\pr_0 \circ f(K_n) = \{0\}$ for $n \gg 0$ by continuity, so that $\pr_0 \circ f(K_n) = \{0\}$ for all $n \geq 0$. This implies that $\pr_0 \circ f(m)$ depends only on $m_0$.

The map $m_0 \mapsto \pr_0 \circ f(m)$ from $\mfont(V_1)$ to $\mfont(V_2)$ is therefore a well-defined map of $(\psi,\Gamma)$-modules, which is non-zero because $f$ is an isomorphism. By proposition II.3.4 of \cite{PCmir}, it extends to a map $\dfont(V_1) \to \dfont(V_2)$ so that $\dfont(V_1) \simeq \dfont(V_2)$  and $V_1 \simeq V_2$.
\end{proof}

\section{Representations of $\B_2(\Qp)$}\label{repb}

In this section, we prove that every infinite dimensional smooth irreducible representation of $\B$ having a central character is of the form $\Omega_\chi(V)$ for some $V$ and $\chi$. We start by studying representations of $\B$. Let $\Z = \{ a \cdot \Id$, $a \in \Qp^\times\}$ be the center of $\B$, and let 
\[ \K = \B_2(\Zp) = \pmat{\Zp^\times & \Zp \\ 0 & \Zp^\times}. \]
If $\beta \in \Qp$ and $\delta \in \ZZ$, let
\[ g_{\beta,\delta} = \pmat{ 1 & \beta \\ 0 & p^\delta}. \]
Let $A=\{ \alpha_n p^{-n} + \cdots + \alpha_1 p^{-1}$ where $0 \leq \alpha_j \leq p-1 \}$ so that $A$ is a system of representatives of $\Qp/\Zp$. The following is lemma 1.2.1 of \cite{LBmod}.

\begin{lemm}\label{cosets}
We have $\B = \coprod_{\beta \in A, \delta\in \ZZ} g_{\beta,\delta} \cdot \K\Z$.
\end{lemm}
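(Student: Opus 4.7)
The plan is to give a direct matrix-level verification of the coset decomposition. First I would describe the subgroup $\K\Z$ explicitly: since $\Z$ is central in $\B$, $\K\Z = \Z\K$ is a subgroup of $\B$, and unwinding the product of a general element of $\K$ with a scalar $c\cdot\Id \in \Z$ shows
\[
\K\Z = \left\{ \pmat{x & y \\ 0 & w} \in \B : \vp(x) = \vp(w) \text{ and } \vp(y) \geq \vp(w) \right\}.
\]
This ``valuation profile'' description is the key tool for both parts.

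For existence, given $g = \smat{s & t \\ 0 & u} \in \B$, I would set $\delta = \vp(u) - \vp(s) \in \ZZ$ and $w = u/p^\delta$, so that $\vp(w) = \vp(s)$ by construction. Since $A$ is a system of representatives for $\Qp / \Zp$, there is a unique $\beta \in A$ with $t/w - \beta \in \Zp$. A direct computation then gives
\[
g_{\beta,\delta}^{-1}\, g = \pmat{s & t - \beta w \\ 0 & w},
\]
and the valuation profile is satisfied ($\vp(s) = \vp(w)$ and $\vp(t - \beta w) \geq \vp(w)$ by choice of $\beta$), so this matrix lies in $\K\Z$ and $g \in g_{\beta,\delta}\K\Z$.

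For uniqueness (the disjointness of the union), suppose $g_{\beta_1,\delta_1}\K\Z = g_{\beta_2,\delta_2}\K\Z$, so that $g_{\beta_2,\delta_2}^{-1} g_{\beta_1,\delta_1} \in \K\Z$. Computing this product yields an upper-triangular matrix with diagonal entries $(1, p^{\delta_1 - \delta_2})$ and upper-right entry $\beta_1 - \beta_2 p^{\delta_1 - \delta_2}$. The equality of valuations on the diagonal forces $\delta_1 = \delta_2$, and then the integrality of the $(1,2)$-entry becomes $\beta_1 - \beta_2 \in \Zp$, so $\beta_1 = \beta_2$ by the defining property of $A$. The whole argument is bookkeeping; the only step where one might slip is the preliminary description of $\K\Z$, which must be obtained by carefully tracking how the scalar absorbs into the diagonal entries.
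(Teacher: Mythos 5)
Your argument is correct and complete: the valuation-profile description of $\K\Z$ is right, the matrix computations for both existence and disjointness are accurate, and the edge case $t=0$ is handled implicitly by the convention $\vp(0)=+\infty$. The paper itself defers to lemma~1.2.1 of \cite{LBmod} for this statement, and what you have written is the same elementary coset-representative computation one finds there, so there is nothing to compare beyond noting agreement.
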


If $\sigma_1$ and $\sigma_2$ are two smooth characters $\sigma_i : \Qp^\times \to E^\times$, then let $\sigma = \sigma_1 \otimes \sigma_2 : \B \to E^\times$ be the character $\sigma : \smat{a & b \\ 0 & d} \mapsto \sigma_1(a)\sigma_2(d)$ and let $\ind_{\K\Z}^{\B}\sigma$ be the set of functions $f : \B \to E$ satisfying $f(kg) = \sigma(k)f(g)$ if $k \in \K\Z$ and such that $f$ has compact support modulo $\Z$. If $g \in \B$, denote by $[g]$ the function $[g] : \B \to E$ defined by $[g](h)=\sigma(hg)$ if $h \in \K\Z g^{-1}$ and $[g](h)=0$ otherwise. Every element of $\ind_{\K\Z}^{\B}\sigma$ is a finite linear combination of some functions $[g]$. We make $\ind_{\K\Z}^{\B}\sigma$ into a representation of $\B$ in the usual way: if $g \in \B$, then $(gf)(h)=f(hg)$. In particular, we have $g [h] = [gh]$ in addition to the formula $[gk]=\sigma(k)[g]$ for $k\in\K\Z$.

\begin{theo}\label{allquot}
If $\Pi$ is a smooth irreducible representation of $\B$ having a central character, then there exists $\sigma = \sigma_1 \otimes \sigma_2$ such that $\Pi$ is a quotient of $\ind_{\K\Z}^{\B}\sigma$.
\end{theo}

\begin{proof}
This is theorem 1.2.3 of \cite{LBmod}; we recall the proof here. The group $\I$ defined by
\[ \I = \pmat{1+p\Zp & \Zp \\ 0 & 1+p\Zp} \]
is a pro-$p$-group and hence $\Pi^{\I} \neq 0$. Furthermore, $\I$ is a normal subgroup of $\K$ so that $\Pi^{\I}$ is a representation of $\K/\I = \Fp^\times \times \Fp^\times$. Since this group is a finite group of order prime to $p$, we have $\Pi^{\I} = \oplus_{\eta} \Pi^{\K=\eta}$ where $\eta$ runs over the characters of $\Fp^\times \times \Fp^\times$ and since $\Z$ acts through a character by hypothesis, there exists a character $\sigma$ of $\K\Z$ and $v \in \Pi$ such that $k \cdot v = \sigma(k)v$ for $k\in\K\Z$. By Frobenius reciprocity, we get a non-trivial map $\ind_{\K\Z}^{\B}\sigma \to \Pi$ and this map is surjective since $\Pi$ is irreducible.
\end{proof}

Write $\sigma = \sigma_1 \otimes \sigma_2$. We can always assume that we have $\sigma_2(p)=1$, which we now do.

By lemma \ref{cosets}, each $f \in \ind_{\K\Z}^{\B}\sigma$ can be written as $f = \sum_{\beta \in A,\delta \in \ZZ} \alpha(\beta,\delta) [g_{\beta,\delta}]$. 

\begin{defi}\label{defmaps}
Let $s : \ind_{\K\Z}^{\B}\sigma \to E$ be the map \[ s : \sum_{\beta \in A,\delta \in \ZZ} \alpha(\beta,\delta) [g_{\beta,\delta}] \mapsto \sum_{\beta \in A,\delta \in \ZZ} \alpha(\beta,\delta). \]
\end{defi}

The following lemma results from a straightforward calculation (recall that $\sigma_2(p)=1$).

\begin{lemm}\label{indtoe}
The map $s : \ind_{\K\Z}^{\B}\sigma \to E(\sigma)$ is $\B$-equivariant.
\end{lemm}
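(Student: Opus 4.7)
The plan is to reduce the $\B$-equivariance of $s$ to a calculation on the elementary functions $[g_{\beta,\delta}]$. By linearity, it suffices to verify that $s(h \cdot [g_{\beta,\delta}]) = \sigma(h)\, s([g_{\beta,\delta}])$ for every $h \in \B$ and every $(\beta,\delta) \in A \times \ZZ$. Since $s([g_{\beta,\delta}]) = 1$, the identity to be checked is simply $s(h \cdot [g_{\beta,\delta}]) = \sigma(h)$.

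To compute the left-hand side, apply Lemma~\ref{cosets} to write $h \cdot g_{\beta,\delta} = g_{\beta',\delta'}\, k$ uniquely, with $\beta' \in A$, $\delta' \in \ZZ$, and $k \in \K\Z$. Using the relation $h \cdot [g] = [hg]$ together with the rule $[g'k'] = \sigma(k')\,[g']$ for $k' \in \K\Z$, one gets $h \cdot [g_{\beta,\delta}] = \sigma(k)\,[g_{\beta',\delta'}]$, hence $s(h \cdot [g_{\beta,\delta}]) = \sigma(k)$. The problem therefore becomes: show $\sigma(k) = \sigma(h)$.

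Since $\sigma = \sigma_1 \otimes \sigma_2$ is a character of the entire Borel $\B$, one has $\sigma(k) = \sigma(g_{\beta',\delta'})^{-1}\sigma(h)\sigma(g_{\beta,\delta})$, and the claim reduces to $\sigma(g_{\beta,\delta}) = 1$ for all $(\beta,\delta)$. But directly from the definition, $\sigma(g_{\beta,\delta}) = \sigma_1(1)\sigma_2(p^\delta) = \sigma_2(p)^\delta$, which equals $1$ precisely because of the normalization $\sigma_2(p)=1$ imposed just before the lemma. There is no substantive obstacle here; the entire content is the observation that this normalization places each coset representative $g_{\beta,\delta}$ in the kernel of $\sigma$, so that the rewriting $h g_{\beta,\delta} = g_{\beta',\delta'}\, k$ introduces no correction factor when $s$ is applied.
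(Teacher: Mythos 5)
Your proof is correct and is exactly the ``straightforward calculation'' the paper alludes to: reduce to the basis elements $[g_{\beta,\delta}]$, use the coset decomposition of Lemma~\ref{cosets} together with the relations $g[g']=[gg']$ and $[g'k']=\sigma(k')[g']$, and observe that the normalization $\sigma_2(p)=1$ kills the factor $\sigma(g_{\beta,\delta})$. No discrepancy with the paper's (omitted) argument.
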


Let $\B^+$ and $\B^-$ denote the monoids
\[ \B^+ = \left\{ \pmat{p^{\ZZ_{\geq 0}} \Zp^\times & \Zp \\ 0 & \Zp^\times }\right\} \subset \B,\qquad \B^- = \left\{ \pmat{\Zp^\times & \Zp \\ 0 & p^{\ZZ_{\geq 0}} \Zp^\times }\right\} \subset \B,\]
and let $(\ind_{\K\Z}^{\B}\sigma)^+$ denote the set of elements of $\ind_{\K\Z}^{\B}\sigma$ with support in $\B^+$. Since
\[ \pmat{p^n a & b \\ 0 & d} = \pmat{ 1 & p^{-n} bd^{-1} \\ 0 & p^{-n} } \pmat{a&0\\0&d}\pmat{p^n&0\\0&p^n}, \]
$(\ind_{\K\Z}^{\B}\sigma)^+$ is the set of $f=\sum \alpha(\beta,\delta) [g_{\beta,\delta}]$ with $\delta \leq 0$ and $\beta \in p^{-\delta} \Zp / \Zp$.

\begin{lemm}\label{imxkers}
If $y = \sum_{\beta \in A,\delta \in \ZZ} \alpha(\beta,\delta) [g_{\beta,\delta}] \in (\ind_{\K\Z}^{\B}\sigma)^+$, then $y \in X \cdot (\ind_{\K\Z}^{\B}\sigma)^+$ if and only if $\sum_{\beta \in A} \alpha(\beta,\delta)=0$ for all $\delta \leq 0$.
\end{lemm}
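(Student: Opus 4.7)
The plan is to identify the action of $X \in E\dcroc{X}$ on $(\ind_{\K\Z}^\B\sigma)^+$ with that of $u-1$, where $u = \smat{1&1\\0&1}$; to exhibit an $E[X]$-stable decomposition of $(\ind_{\K\Z}^\B\sigma)^+$ indexed by $\delta \leq 0$; and then to reduce to the standard fact that the augmentation ideal of a cyclic group algebra is principal.

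First, I would invoke Proposition \ref{egmod} for the unipotent subgroup $U^+ = \smat{1&\Zp\\0&1}$: under the identification $E\dcroc{\Zp} \simeq E\dcroc{X}$ sending $1 \in \Zp$ to $1+X$, the variable $X$ acts as $u - 1$. A direct matrix computation gives $u\,g_{\beta,\delta} = \smat{1 & \beta+p^\delta \\ 0 & p^\delta}$; writing $\beta' \in A$ for the representative of $\beta + p^\delta \bmod \Zp$, one factors $u\,g_{\beta,\delta} = g_{\beta',\delta} \cdot \smat{1 & c \\ 0 & 1}$ with $c = (\beta+p^\delta-\beta')/p^\delta$. The hypothesis $\delta \leq 0$ is exactly what forces $c \in \Zp$, so the matrix on the right lies in $\K$ and $\sigma$ is trivial on it; therefore
\[X \cdot [g_{\beta,\delta}] = [g_{\beta',\delta}] - [g_{\beta,\delta}].\]

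The crucial observation is that this action preserves the $\delta$-level. Let $V_\delta$ denote the finite-dimensional $E$-span of the elements $[g_{\beta,\delta}]$ appearing in $(\ind_{\K\Z}^\B\sigma)^+$ at fixed $\delta$; since $\beta'$ is still an allowed index whenever $\beta$ is, each $V_\delta$ is $X$-stable, and
\[(\ind_{\K\Z}^\B\sigma)^+ = \bigoplus_{\delta \leq 0} V_\delta\]
as $E[X]$-modules. Writing $y = \sum_\delta y_\delta$ accordingly, the ``iff'' in the lemma reduces to showing, for each fixed $\delta \leq 0$, that $y_\delta \in X V_\delta$ if and only if $\sum_\beta \alpha(\beta,\delta) = 0$.

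For this last step, the translation $\beta \mapsto \beta + p^\delta$ makes the index set for $V_\delta$ into a cyclic group of order $p^{-\delta}$, identifying $V_\delta$ with the regular representation $E[\ZZ/p^{-\delta}\ZZ]$ on which $X$ acts as $g - 1$ for a generator $g$. For any cyclic group $G = \langle g \rangle$ and any field $E$, the augmentation ideal of $E[G]$ is principal and equals $(g-1)E[G]$, which in turn coincides with the kernel of the augmentation $\sum \alpha_k g^k \mapsto \sum \alpha_k$. This gives exactly the claimed description of $X V_\delta$, and combined with the direct-sum decomposition above finishes the proof. The one delicate point is the initial identification $X = u-1$ via Proposition \ref{egmod}; once that is in place, everything else is transparent linear algebra inside the group algebra of a cyclic $p$-group.
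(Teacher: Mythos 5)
Your proof is correct and follows essentially the same route as the paper's: compute $X\cdot[g_{\beta,\delta}]=[g_{\beta',\delta}]-[g_{\beta,\delta}]$, observe the action preserves the $\delta$-grading, and reduce to the image of $g-1$ in the regular representation of the cyclic $p$-group $p^\delta\Zp/\Zp$. One small arithmetic slip that doesn't affect anything: since $g_{\beta',\delta}^{-1}\,u\,g_{\beta,\delta}=\smat{1&\beta+p^\delta-\beta'\\0&1}$, the unipotent factor is $c=\beta+p^\delta-\beta'$ rather than $(\beta+p^\delta-\beta')/p^\delta$; this lies in $\Zp$ automatically (and also note the appeal to Proposition \ref{egmod} is superfluous, since the paper simply \emph{defines} $X=\smat{1&1\\0&1}-\Id$).
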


\begin{proof}
We have $\smat{1&1\\0&1} [g_{\beta,\delta}] = [g_{\beta+p^{-\delta},\delta}]$ so that 
\[ X \cdot  \sum_{\beta \in A,\delta \in \ZZ} \alpha(\beta,\delta) [g_{\beta,\delta}] =  \sum_{\beta \in A,\delta \in \ZZ} (\alpha(\beta-p^{-\delta},\delta)-\alpha(\beta,\delta)) [g_{\beta,\delta}]. \]  
Since $\beta \in p^{-\delta} \Zp / \Zp$, the lemma follows from the fact that the image of the map $(x_i)_i \mapsto (x_{i-1}-x_i)_i$ from $E^{\ZZ/p^\delta\ZZ}$ to itself is the set of sequences $(x_i)_i$ with $\sum_i x_i = 0$.
\end{proof}

Write $F=\smat{p&0\\0&1}$ and $X=\smat{1&1\\0&1}-\Id$ so that $A=E\dcroc{X}$ is the completed group ring of $\smat{1&\Zp\\0&1}$ and let $A\{F\}$ be the non-commutative ring of polynomials in $F$ with coefficients in $A$, where $FX=X^pF$. If $\Pi = \ind_{\K\Z}^{\B}\sigma / R$ is a quotient of $\ind_{\K\Z}^{\B}\sigma$, let $\Pi^+$ denote the image of $(\ind_{\K\Z}^{\B}\sigma)^+$ in $\Pi$. The space $\Pi^+$ is then a left $A\{F\}$-module, as well as a torsion $A$-module (since $\Pi$ is smooth). Recall (see \S 3 of \cite{MEcoh}) that an admissible $A$-module is an $A$-module $M$ that is torsion and such that $M^{X=0}$ is finite dimensional.

\begin{prop}\label{mecoh}
If $M$ is a finitely generated left $A\{F\}$-module that is torsion over $A$, then $M$ is admissible as an $A$-module if and only if the quotient $M/XM$ is finite dimensional over $E$.
\end{prop}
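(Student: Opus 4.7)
The plan is to pass to the Pontryagin dual. I would set $N := M^\vee$; by Theorem \ref{dualprodis}, $N$ is a linearly compact $E$-vector space carrying the transposed $A$-action and a continuous operator $\psi$ dual to $F$, which satisfies $\psi(X^p f) = X \psi(f)$ (the adjoint of $F(Xm) = X^p F(m)$). Dualizing the four-term exact sequence $0 \to M[X] \to M \stackrel{X}{\to} M \to M/XM \to 0$ yields the identifications $N[X] \cong (M/XM)^\vee$ and $N/XN \cong M[X]^\vee$, under which finite-dimensionality is preserved. By Theorem \ref{naklem}, $N/XN$ is finite-dimensional if and only if $N$ is finitely generated over $A = E\dcroc{X}$. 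Combining these, the proposition reduces to the equivalence \emph{$N$ is finitely generated over $A$ if and only if $N[X]$ is finite-dimensional over $E$.}

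The forward implication is immediate from the structure theorem for finitely generated modules over the DVR $A$: such a module decomposes as $A^r \oplus \bigoplus_i A/X^{n_i}$, and its $X$-torsion is patently finite-dimensional. For the reverse --- the main point --- I would exploit the finite generation of $M$ over $A\{F\}$. Dualizing a surjection $A\{F\}^k \twoheadrightarrow M$ yields a closed $A$-submodule embedding $N \hookrightarrow (A\{F\}^\vee)^k$ stable under $\psi$. Iterating the relation $\psi(X^p f) = X\psi(f)$ gives $\psi(X^{pn}f) = X^n\psi(f)$, so $\psi$ stabilizes each $N[X^n]$ (if $X^n f = 0$ then $X^{pn}f = 0$ and $X^n\psi(f) = \psi(X^{pn}f) = 0$); moreover the sequence $0 \to N[X^{n-1}] \to N[X^n] \stackrel{X^{n-1}}{\to} N[X]$ yields $\dim_E N[X^n] \leq n\dim_E N[X]$, so each $N[X^n]$ is a finite-dimensional $\psi$-stable subspace of $N$. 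The torsion-over-$A$ hypothesis on $M$ provides an $n_0$ with $X^{n_0}$ killing all $A\{F\}$-generators of $M$, which restricts where $N$ can sit inside $(A\{F\}^\vee)^k$.

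The main obstacle is the final combinatorial step: extracting finite generation of $N$ over $A$ from these constraints. A naive Nakayama-style argument attempted directly on $M$ does not suffice, as the Prüfer-like $A\{F\}$-module $M = E\dpar{X}/E\dcroc{X}$ (singly generated by $X^{-1}$ with $F(X^{-n}) = X^{-pn}$) satisfies $M = XM$ while being infinite-dimensional over $E$; so finite generation of $N$ has to be pieced together from the linear compactness of $N$ and the $\psi$-structure on the dual side, compatibly with the embedding into $(A\{F\}^\vee)^k$ cut out by the torsion condition, or alternatively by invoking the finiteness result of \S 3 of \cite{MEcoh}.
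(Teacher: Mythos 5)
The paper offers no argument for this proposition: the stated proof is a one-line citation to Proposition~3.5 of \cite{MEcoh}, so there is no internal proof to compare against, and the entire content is what you would have to supply.

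Your duality reduction is set up correctly: dualizing $0 \to M[X] \to M \stackrel{X}{\to} M \to M/XM \to 0$ gives $N[X]\cong (M/XM)^\vee$ and $N/XN\cong M[X]^\vee$, theorem~\ref{naklem} identifies admissibility of $M$ (finiteness of $\dim_E M[X]$) with finite generation of $N$ over $A$, and the easy direction (finite generation of $N$ over the discrete valuation ring $A$ forces $N[X]$ finite-dimensional) is fine. But the substance of the proposition is the converse, and you leave it open. This is a genuine gap, not a matter of exposition: the implication that $N[X]$ finite-dimensional forces $N$ finitely generated over $A$ (equivalently, that $M/XM$ finite-dimensional forces $M[X]$ finite-dimensional) is exactly the nontrivial assertion being cited, and translating it to the dual side has not made it any easier. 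The observations you assemble --- $\psi$ stabilizes each $N[X^n]$, $\dim_E N[X^n]\leq n\dim_E N[X]$, and the torsion bound on the $A\{F\}$-generators constrains the embedding $N\hookrightarrow(A\{F\}^\vee)^k$ --- only control the $X$-power-torsion of $N$ and say nothing a priori about its torsion-free rank, so they do not yield a bound on $\dim_E N/XN$. The fallback you mention, ``invoking the finiteness result of \S 3 of \cite{MEcoh}'', is circular: that result \emph{is} the proposition. What actually does the work in \cite{MEcoh} is a coherence theorem for the ring $A\{F\}$ and the resulting structure theory for finitely presented $A$-torsion $A\{F\}$-modules; none of that is reproduced or replaced in your argument.
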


\begin{proof}
This is proposition 3.5 of \cite{MEcoh}.
\end{proof}

\begin{lemm}\label{indmodx}
The map $(\ind_{\K\Z}^{\B}\sigma)^+ \to E[F]$ given by 
\[ \sum_{\beta \in A,\delta \leq 0} \alpha(\beta,\delta) [g_{\beta,\delta}] \mapsto \sum_{n\geq 0} \left(\sum_{\beta \in A} \alpha(\beta,-n)\right) F^n\] 
gives rise to an isomorphism of $A\{F\}$-modules $(\ind_{\K\Z}^{\B}\sigma)^+ / X = E[F]$.
\end{lemm}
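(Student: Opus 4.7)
The plan is to check four things: (i) the map is well-defined; (ii) it is surjective; (iii) its kernel is exactly $X \cdot (\ind_{\K\Z}^{\B}\sigma)^+$; and (iv) it is $A\{F\}$-equivariant for a natural choice of $A\{F\}$-module structure on the target. Well-definedness is immediate: each $f \in (\ind_{\K\Z}^{\B}\sigma)^+$ is compactly supported modulo $\Z$, so the coefficients $\alpha(\beta,\delta)$ are nonzero only for finitely many $(\beta,\delta)$ with $\delta \leq 0$, and hence the right-hand side is a finite sum in $E[F]$. For surjectivity, observe that $[g_{0,-n}]$ maps to $F^n$, so the image contains $E[F^n]$ for every $n \geq 0$.

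The kernel calculation is a direct consequence of lemma \ref{imxkers}: the image of $y = \sum \alpha(\beta,\delta)[g_{\beta,\delta}]$ is $0$ exactly when $\sum_{\beta \in A} \alpha(\beta,\delta) = 0$ for every $\delta \leq 0$, which is precisely the characterization of $X \cdot (\ind_{\K\Z}^{\B}\sigma)^+$ given there. Combined with surjectivity, this already shows that the induced map on $(\ind_{\K\Z}^{\B}\sigma)^+/X$ is an isomorphism of $E$-vector spaces; it remains to upgrade this to an isomorphism of $A\{F\}$-modules.

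For equivariance, the action of $X$ is already handled: the computation $X \cdot [g_{\beta,\delta}] = [g_{\beta + p^{-\delta},\delta}] - [g_{\beta,\delta}]$ used in the proof of lemma \ref{imxkers} shows that $X$ acts as $0$ on the quotient, consistent with declaring $X=0$ on $E[F]$. For the action of $F$, I would compute directly from $g \cdot [h] = [gh]$ and the identity $[gk] = \sigma(k)[g]$ for $k \in \K\Z$: starting from the matrix equation
\[ F \cdot g_{\beta,\delta} = \pmat{p & p\beta \\ 0 & p^\delta} = (pI) \cdot g_{\beta,\delta-1} = g_{\beta,\delta-1} \cdot (pI), \]
one obtains $F \cdot [g_{\beta,\delta}] = \sigma(pI)[g_{\beta,\delta-1}] = \sigma_1(p) [g_{\beta,\delta-1}]$ (using $\sigma_2(p)=1$). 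Thus under the map, $F \cdot F^{n} = \sigma_1(p) F^{n+1}$, which fixes the $A\{F\}$-module structure on $E[F]$ as the one where $X$ acts trivially and $F$ acts by multiplication by $\sigma_1(p) F$.

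No step is really an obstacle here: the lemma is essentially a bookkeeping statement on top of lemma \ref{imxkers}. The only point requiring attention is the small twist by $\sigma_1(p)$ appearing in the action of $F$; everything else is a one-line verification once the basis $\{[g_{\beta,\delta}]\}$ is used to reduce computations to single matrix identities.
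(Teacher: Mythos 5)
Your proof is correct and takes the same route as the paper's, which defers the kernel computation to lemma~\ref{imxkers} and declares the surjectivity and $A\{F\}$-equivariance straightforward. Your observation that $F$ acts on the target as multiplication by $\sigma_1(p)F$ rather than by $F$ is an accurate and worthwhile point to flag, though it is harmless for the application in theorem~\ref{pipadm}: the only fact used there is that every non-trivial $A\{F\}$-module quotient of $E[F]$ is finite-dimensional over $E$, which holds regardless of the scalar twist.
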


\begin{proof}
It is straightforward to check that the given map $(\ind_{\K\Z}^{\B}\sigma)^+ \to E[F]$ is a surjective map of $A\{F\}$-modules. Its kernel is $X \cdot (\ind_{\K\Z}^{\B}\sigma)^+$ by lemma \ref{imxkers}.
\end{proof}

\begin{lemm}\label{indpmon}
The $A\{F\}$-module $(\ind_{\K\Z}^{\B}\sigma)^+$ is generated by $[\Id]$.
\end{lemm}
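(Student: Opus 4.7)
The plan is to exhibit every basis vector of $(\ind_{\K\Z}^{\B}\sigma)^+$ as an element of $A\{F\}\cdot[\Id]$. Recall that $(\ind_{\K\Z}^{\B}\sigma)^+$ has $E$-basis the vectors $[g_{\beta,\delta}]$ with $\delta\leq 0$ and $\beta$ running through a finite system of representatives (inside $A$) of $p^{\delta}\Zp/\Zp$, so it suffices to put each such $[g_{\beta,\delta}]$ into the submodule generated by $[\Id]=[g_{0,0}]$.

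First I would apply powers of $F$ to $[\Id]$. A direct matrix computation gives the factorisation $\smat{p^n & 0 \\ 0 & 1} = g_{0,-n}\cdot\smat{p^n & 0 \\ 0 & p^n}$; since the right-hand factor is central and $\sigma_2(p)=1$, one obtains
\[ F^n\cdot[\Id] = [\smat{p^n&0\\0&1}] = \sigma_1(p)^n\,[g_{0,-n}], \]
so $[g_{0,-n}] \in A\{F\}\cdot[\Id]$ for every $n\geq 0$.

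Next I would translate these ``anti-diagonal'' vectors along the unipotent direction via the action of $A$. For $c\in\Zp$ the group element $\smat{1 & c \\ 0 & 1}$ acts on the smooth representation $(\ind_{\K\Z}^{\B}\sigma)^+$ as $(1+X)^c \in A$, and matrix multiplication yields $\smat{1 & c \\ 0 & 1}\,g_{0,-n} = \smat{1 & cp^{-n} \\ 0 & p^{-n}}$. Writing $cp^{-n} = \beta + z$ with $\beta\in A$ and $z\in\Zp$, one has $\smat{1 & cp^{-n} \\ 0 & p^{-n}} = g_{\beta,-n}\cdot\smat{1 & z \\ 0 & 1}$, and the correction $\smat{1 & z \\ 0 & 1}\in\K$ is killed by $\sigma$; hence $(1+X)^c\cdot[g_{0,-n}] = [g_{\beta,-n}]$. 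As $c$ ranges over $\Zp$, the residue $\beta = cp^{-n}\bmod\Zp$ exhausts the entire set $p^{-n}\Zp/\Zp$.

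Combining the two steps, $A\{F\}\cdot[\Id]$ contains every $[g_{\beta,\delta}]$ with $\delta\leq 0$ and $\beta$ a representative of $p^{\delta}\Zp/\Zp$, so it equals $(\ind_{\K\Z}^{\B}\sigma)^+$. No substantive obstacle is anticipated here: the argument is purely computational, resting on the two observations that $F$ sweeps out the parameter $\delta$ and that $A$ sweeps out the parameter $\beta$ at each fixed level $\delta$. The only minor bookkeeping is the reduction of $cp^{-n}$ to its representative in $A$ modulo $\Zp$, which is absorbed into the $\K$-factor on which $\sigma$ is trivial.
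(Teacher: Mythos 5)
Your proof is correct and is essentially the paper's argument: both decompose a generic element of $\B^+$ (or its representative $g_{\beta,\delta}$) as a unipotent upper-triangular factor times a power of $\smat{p&0\\0&1}$ times a $\K\Z$-factor absorbed by $\sigma$, so that $[g_{\beta,-n}]$ is obtained from $[\Id]$ by applying $F^n$ and then a suitable $(1+X)^c$. Your writeup is a bit more explicit in tracking the representatives $\beta\in A$ and the resulting scalar $\sigma_1(p)^n$, but the underlying matrix factorization is the same as the one used in the paper.
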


\begin{proof}
The fact that if $n \geq 0$, $a,d \in \Zp^\times$ and $b \in \Zp$, then $[\smat{p^n a & b \\ 0 & d}]$ belongs to the $A\{F\}$-module generated by $[\Id]$ follows from the formula
\[ \pmat{p^n a & b \\ 0 & d} = \pmat{1&(b-p^na)d^{-1}\\0&1} \pmat{p^n&0\\0&1} \pmat{a&0\\0&d}.\]
\end{proof}

\begin{theo}\label{pipadm}
If $\Pi$ has no quotient isomorphic to $E(\sigma)$, then the $A$-module $\Pi^+$ is admissible.
\end{theo}

\begin{proof}
By proposition \ref{mecoh} above (Emerton's theorem), it is enough to show that $\Pi^+$ is finitely generated over $A\{F\}$ and that $\Pi^+ / X \Pi^+$ is a finite dimensional $E$-vector space. The finite generation follows from the fact that $\Pi^+$ is a quotient of $(\ind_{\K\Z}^{\B}\sigma)^+$, which is generated by one element over $A\{F\}$ by lemma \ref{indpmon}. 

Let $R^+=(\ind_{\K\Z}^{\B}\sigma)^+ \cap R$. We have an exact sequence of $A\{F\}$-modules
\[ R^+/ X \to (\ind_{\K\Z}^{\B}\sigma)^+ / X \to \Pi^+/X \to 0. \]
By lemma \ref{indmodx}, we have an isomorphism of $A\{F\}$-modules $(\ind_{\K\Z}^{\B}\sigma)^+ / X = E[F]$. Since any non-trivial quotient of $E[F]$ is finite dimensional over $E$, it is enough to show that $R^+$ has non-trivial image in $(\ind_{\K\Z}^{\B}\sigma)^+/X$. If this was not the case, then we would have $R^+ \subset X \cdot (\ind_{\K\Z}^{\B}\sigma)^+$. Lemma \ref{imxkers} shows that $X \cdot (\ind_{\K\Z}^{\B}\sigma)^+ \subset \ker(s)$ where $s$ is the map of definition \ref{defmaps}. If $y \in R$, then $F^n y \in R^+$ for $n \gg 0$ so that we would have $R \subset \ker(s)$ and therefore by lemma \ref{indtoe}, $\Pi$ would admit a surjective map to $E(\sigma)$.
\end{proof}

\begin{proof}[Proof of theorems A and A']
Let $\Pi$ be an infinite dimensional smooth irreducible representation of $\B$ having a central character. By theorem \ref{allquot}, we can write $\Pi = \ind_{\K\Z}^{\B}\sigma / R$ and by theorem \ref{pipadm}, $\Pi^+$ is an admissible $E\dcroc{X}$-module. Its dual $\mfont = (\Pi^+)^*$ is therefore a linearly compact topological $E$-vector space, and an $E\dcroc{X}$-module by proposition \ref{egmod}. In addition, $\mfont/X\mfont = \mfont_{\Zp}$ is finite dimensional by proposition \ref{coinv}. By theorem \ref{naklem} (Nakayama's lemma), $\mfont$ is finitely generated over $E\dcroc{X}$. 

Since $\Pi^+$ is a representation of $\B^+\Z$, its dual $\mfont$ is a representation of $\B^-\Z$. We define a $(\psi,\Gamma)$-module structure on $\mfont$ as follows: we know that it is a finitely generated module over $E\dcroc{X}$ and we set $\psi(m) = \smat{1&0\\0&p} m$ and $[a](m) = \smat{1&0\\0&a^{-1}} m$ if $a\in \Zp^\times$.

If $f : \Pi \to E$ is an element of $\Pi^*$, let $f_n$ denote the restriction of $\smat{1&0\\0&p^n}f$ to $\Pi^+$. The map $f \mapsto \{f_n\}_{n \in \ZZ}$ gives rise to an equivariant map $\Pi^* \to \projlim_\psi \mfont$. Since $\Pi^*$ is irreducible by proposition \ref{dualirred}, theorem \ref{subpsirred} applied to $\Sigma=\Pi^*$ gives us a free irreducible $(\psi,\Gamma)$-module $\nfont$ such that $\Pi^* = \projlim_\psi \nfont$. Theorem \ref{psitophi} now says that $\nfont = \mfont(\dfont)$ for some irreducible $(\phi,\Gamma)$-module $\dfont$ so that $\Pi^* = \projlim_\psi \mfont(\dfont)$. Theorem \ref{dualprodis} finally says that $\Pi = (\projlim_\psi \mfont(\dfont))^*$ which proves theorems A and A' by the bijections constructed in \S \ref{galrep} (theorem \ref{fontequiv} if $E$ is a finite field and theorem \ref{equiweil} if $E$ is algebraically closed).
\end{proof}

\begin{rema}
\label{excc}
Now that we have a classification of smooth irreducible representations of $\B$ having a central character, we can apply the same methods as in \cite{LBcc} in order to prove that in fact, every smooth irreducible representation of $\B$ over an algebraically closed field necessarily has a central character.
\end{rema}


\providecommand{\bysame}{\leavevmode ---\ }
\providecommand{\og}{``}
\providecommand{\fg}{''}
\providecommand{\smfandname}{\&}
\providecommand{\smfedsname}{\'eds.}
\providecommand{\smfedname}{\'ed.}
\providecommand{\smfmastersthesisname}{M\'emoire}
\providecommand{\smfphdthesisname}{Th\`ese}

\end{document}